\newtheorem{theorem}{Theorem}
\newtheorem{proposition}[theorem]{Proposition}
\newtheorem{lemma}[theorem]{Lemma}
\newtheorem{corollary}[theorem]{Corollary}
\theoremstyle{definition}
\newtheorem{definition}[theorem]{Definition}
\newtheorem{remark}[theorem]{Remark}
\newtheorem{conjecture}[theorem]{Conjecture}
\newtheorem{example}[theorem]{Example}
\newtheorem*{definition*}{Definition}
\numberwithin{theorem}{section}
\newcommand{\PP}{\mathbb{P}}
\newcommand{\RR}{\mathbb{R}}
\newcommand{\CC}{\mathbb{C}}
\title{\bf The Two Lives of the Grassmannian}
\author{Karel Devriendt, Hannah Friedman, \\ Bernhard Reinke and Bernd Sturmfels}
\date{}
\begin{document}
\maketitle

\begin{abstract}
  \noindent
  The real Grassmannian is both a projective variety  (via Pl\"ucker coordinates) and
    an affine variety (via  orthogonal projections).
    We connect these two representations, and we develop the
    commutative algebra of the latter variety.
    We introduce the
    squared Grassmannian, and we
    study applications to determinantal point processes in statistics.
\end{abstract}

\section{Introduction}

Linear subspaces of dimension $d$ in the vector space $\RR^n$
 correspond to points in the Grassmannian
 ${\rm Gr}(d,n)$, which is a real manifold of dimension $d(n-d)$.
 This manifold is an algebraic variety. This means that it can be
 described by polynomial equations in finitely many variables.
Such a description is useful for working with linear spaces in
applications, such as optimization on manifolds in data science  \cite{GTB} and
scattering amplitudes in  physics~\cite{lauren22}.

When reading about Grassmannians in 
applied contexts, such as \cite{GTB, kassel2019, LLY, LWY},
one learns that there are two fundamentally
different representations of ${\rm Gr}(d,n)$ as an algebraic 
variety. These are the ``two lives''  in our title.
 First, there is the Pl\"ucker embedding, which
realizes ${\rm Gr}(d,n)$ as a projective variety. 
The ambient space is the projective space
$\RR \PP^{\binom{n}{d}-1}$. In this embedding, ${\rm Gr}(d,n)$ is
described by a highly structured system of
quadratic forms, known 
as the Pl\"ucker equations \cite[Chapter 5]{MS}.
Second, there is the representation of  a linear space
by the orthogonal projection onto it. This realizes ${\rm Gr}(d,n)$
as an affine variety. Its ambient space is the
 space $\RR^{\binom{n+1}{2}}$ of
symmetric $n \times n$ matrices $P$.
In that embedding, ${\rm Gr}(d,n)$ is
described by the quadratic equations $P^2 = P$
and the linear equation ${\rm trace}(P) = d$.

The purpose of this article is to connect these
two lives of the Grassmannian ${\rm Gr}(d,n)$
and to explore some consequences of this connection
for algebraic geometry and its applications.
We begin with a simple example that 
explains the two different versions of the Grassmannian.

\begin{example}[$n=5,d=2$] \label{ex:52}
We consider the row span of a $2 \times 5$ matrix that has rank $2$:
$$ A \,\, = \,\, \begin{bmatrix} 
a_{11} & a_{12} & a_{13} & a_{14} & a_{15} \\
a_{21} & a_{22} & a_{23} & a_{24} & a_{25}  \end{bmatrix}. $$
This row space is a point in the $6$-dimensional manifold
${\rm Gr}(2,5)$. The embedding into $\RR \PP^9$ is given by 
 the ten $2 \times 2$ minors
$\, x_{ij} \, =\, a_{1i} a_{2j} - a_{1j} a_{2i}$.
These satisfy the Pl\"ucker equations 
\begin{equation}
\label{eq:plucker25} \!\! \begin{small}
 \begin{matrix} & x_{12} x_{34} - x_{13} x_{24} + x_{14} x_{23}  & = &
x_{12} x_{35} - x_{13} x_{25} + x_{15} x_{23}  & = & 
x_{12} x_{45} - x_{14} x_{25} + x_{15} x_{24} \\    = & 
x_{13} x_{45} - x_{14} x_{35} + x_{15} x_{34}   &  = & 
x_{23} x_{45} - x_{24} x_{35} + x_{25} x_{34} & = & 0 . \end{matrix}
\end{small}
\end{equation}
We note that
these five quadrics are the $4 \times 4$ Pfaffians in the skew-symmetric $5 \times 5 $ matrix
\begin{equation}
\label{eq:skew5}
X \,\, = \,\, \begin{small}
\begin{bmatrix}
\,\,\,0 & \phantom{-} x_{12} & \phantom{-} x_{13} & \phantom{-} x_{14} & \phantom{-} x_{15}  \\
- x_{12} & \,\,\,0 &  \phantom{-} x_{23} & \phantom{-} x_{24} & \phantom{-} x_{25}  \\
- x_{13} & - x_{23}  &\,\,\,  0 & \phantom{-} x_{34} & \phantom{-} x_{35}  \\
- x_{14} & - x_{24}  &  - x_{34} &\,\,\, 0  & \phantom{-} x_{45}  \\
- x_{15} & - x_{25}  &  - x_{35} & -x_{45} &\,\,\,  0  
\end{bmatrix}.
\end{small}
\end{equation}
Hence, ${\rm Gr}(2,5)$ is the projective variety in $\PP^9$ whose
points are the above matrices $X$ of rank $2$, up to scaling.
The subspace represented by $X$ is the row space (or column space) of $X$.

The second life of ${\rm Gr}(2,5)$ takes place in the
affine space $\RR^{15}$ of symmetric $5 \times 5$ matrices
$$  P \,\, = \,\, 
\begin{bmatrix} 
p_{11} & p_{12} & p_{13} & p_{14} & p_{15} \\
p_{12} & p_{22} & p_{23} & p_{24} & p_{25} \\
p_{13} & p_{23} & p_{33} & p_{34} & p_{35} \\
p_{14} & p_{24} & p_{34} & p_{44} & p_{45} \\
p_{15} & p_{25} & p_{35} & p_{45} & p_{55} 
\end{bmatrix}.
$$
We want $P$ to represent the orthogonal
projection $ \RR^5 \rightarrow \RR^5$ onto the row space of~$A$,~i.e.,
\begin{equation}
\label{eq:projector}
P \,\, = \,\, A^T ( A A^T)^{-1} A .
\end{equation}
This is the parametric representation of ${\rm Gr}(2,5)$ as an affine variety in $\RR^{15}$. The equations~are
$$ P^2 = P \quad {\rm and} \quad {\rm trace}(P) = 2. $$
To be completely explicit, ${\rm Gr}(2,5)$ is the irreducible affine variety whose prime ideal equals
$$ \begin{matrix} \langle \,
p_{11}^2+p_{12}^2+p_{13}^2+p_{14}^2+p_{15}^2-p_{11}\,,\,\,
p_{11} p_{12}+p_{12} p_{22}+p_{13} p_{23}+p_{14} p_{24}+p_{15} p_{25}-p_{12}\,,\, \ldots \quad \phantom{x}
\\ \quad \ldots \, , \,\,
 p_{14} p_{15} +p_{24} p_{25}+p_{34} p_{35}+p_{44} p_{45}+p_{45} p_{55}-p_{45} \, , \,\,
 p_{15}^2+p_{25}^2+p_{35}^2+p_{45}^2+p_{55}^2-p_{55} \,,\\
p_{11} + p_{22} + p_{33} + p_{44} + p_{55} - 2 \,
\rangle. \end{matrix}
$$
This inhomogeneous ideal realizes ${\rm Gr}(2,5)$ as a $6$-dimensional variety of degree $40$ in $\RR^{15}$.
By contrast, the ideal from (\ref{eq:plucker25}) realizes ${\rm Gr}(2,5)$ as a
$6$-dimensional variety of 
degree $5$ in $\RR \PP^9$.
\end{example}

This article is organized as follows.
In Section 2, we present an explicit formula, valid for all $d$ and $n$, 
which writes the projection matrix $P$ in terms of the Pl\"ucker coordinates~$X$.
The squares of these $X$ coordinates are proportional to
the $d \times d$ principal minors of $P$
and parametrize the {\em squared Grassmannian} ${\rm sGr}(d,n)$ in $ \PP^{\binom{n}{d}-1}$.
We study the degree and defining equations of ${\rm sGr}(d,n)$ in Section 3.
In Section 4, we turn to probability theory and algebraic statistics,  by studying 
 determinantal point processes given by ${\rm sGr}(d,n)$. We offer a comparison
 with other statistical models represented by the Grassmannian
 and compute ML degrees using numerical methods; see \cite{BT, FSZ, ST}.
  Section 5 is devoted to the {\em projection Grassmannian},
  ${\rm pGr}(d,n)$, which is the embedding of ${\rm Gr}(d,n)$ into the affine space $\RR^{\binom{n+1}{2}}$.
  We prove that the ideal given by
  $P^2 = P$ is radical and is the intersection of $n+1$
  primes, each obtained by setting $\,{\rm trace}(P) = d$.
  Finally, in Section 6, we study the moment map from
the Grassmannian to $\RR^n$, with a focus on the fibers of this map, as shown in~Figures~\ref{fig:octafib}~and~\ref{fig:endler}.

The Grassmannian is ubiquitous within the mathematical sciences.
It has many representations and many occurrences, well beyond the two lives
seen in this paper. Notably, ${\rm Gr}(d,n)$ is a representable functor, a 
homogeneous space, a differentiable manifold,  and a metric space.
For the metric perspective see, e.g.  \cite{BZA, kozlov1997}.
In this paper, we focus on equational descriptions
of Grassmannians, with a view towards statistics and combinatorics.

This article is accompanied by software and data. These materials
are made available in the {\tt MathRepo} collection at MPI-MiS via \
\url{https://mathrepo.mis.mpg.de/TwoLives/}.

\section{From Pl\"ucker coordinates to projection matrices}

Every point in ${\rm Gr}(d,n)$ can be encoded by
a skew-symmetric tensor $X $ or by a projection matrix $P$.
The $\binom{n}{d}$ entries of $X$ are denoted
$x_{i_1 i_2 \ldots i_d}$ where $1 \leq i_1,i_2,\ldots, i_d \leq n$.
Using the sign flips coming from skew-symmetry, we obtain
linearly independent coordinates $x_I$ where $I = (i_1,i_2,\ldots,i_d)$
satisfies  $ i_1 < i_2  < \cdots < i_d$.
We regard $X$ as a point in projective space~$\PP^{\binom{n}{d}-1}$.
It  lies in ${\rm Gr}(d,n)$ if and only if it satisfies the quadratic Pl\"ucker relations.
In this case, $X$ is the vector of  $d \times d$ minors
of any $d \times n$ matrix $A$ whose rows span the~subspace.

The orthogonal projection $\RR^n \rightarrow \RR^n$ onto that subspace is given by
a symmetric  $n \times n$ matrix $P = (p_{ij})$.
 This matrix is unique, and it satisfies $P^2 = P$ and ${\rm rank}(P) = {\rm trace}(P) = d$.
We can compute $X$ from $P$ by selecting
$d $ linearly independent rows, placing them into a
$d \times n$ matrix $A$, and then taking $d \times d$ minors.
Our next result shows how to find $P$ from~$X$.

After posting our article, we learned that Theorem \ref{thm:PPlucker}
also appeared in recent work of Bloch and Karp
on the positive Grassmannian in integrable systems; see
\cite[Lemma 4.11]{BK}.

\begin{theorem} \label{thm:PPlucker}
The entries of the projection matrix $P$ are ratios of quadratic forms in Pl\"ucker coordinates.
Writing  $I$ and $K$ for subsets of
size $d$ and $d-1$ of
$\{1,\ldots,n\}$, we~have
$$ p_{ij} \,\, = \,\, \frac{ \sum_K x_{iK} \,x_{jK}}{ \sum_I x_I^2 }. $$
The sum in the numerator has $\binom{n}{d-1}$ terms,
and the sum in the denominator has $\binom{n}{d}$ terms.
\end{theorem}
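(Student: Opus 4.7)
The plan is to compute $P = A^T (AA^T)^{-1} A$, where $A$ is any $d \times n$ matrix whose rows span the given subspace and whose $d \times d$ minors are the Plücker coordinates $x_I = \det A_I$. The Cauchy--Binet formula applied to $A \cdot A^T$ immediately identifies the denominator: $\det(AA^T) = \sum_I (\det A_I)^2 = \sum_I x_I^2$. The proof therefore reduces to the numerator identity $\det(AA^T) \cdot p_{ij} = \sum_K x_{iK} x_{jK}$.

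For the numerator, I would introduce, for each $k \in \{1,\ldots,n\}$, the $(d+1) \times n$ matrix $\tilde A_k$ obtained from $A$ by appending the row $e_k^T$. A direct computation gives
$$ \tilde A_i \tilde A_j^T \,\,=\,\, \begin{pmatrix} AA^T & A_j \\ A_i^T & \delta_{ij} \end{pmatrix}, $$
and the Schur complement formula then yields $\det(\tilde A_i \tilde A_j^T) = \det(AA^T)\,(\delta_{ij} - p_{ij})$. Independently, Cauchy--Binet expresses the same determinant as $\sum_J \det(\tilde A_i)_J \cdot \det (\tilde A_j)_J$ over $(d{+}1)$-subsets $J$ of $\{1,\ldots,n\}$. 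Laplace expansion along the last row of $\tilde A_k$ shows that $\det(\tilde A_k)_J$ vanishes unless $k \in J$, in which case it equals $\pm \det A_{J \setminus \{k\}}$ with sign determined by the position of $k$ in the sorted tuple $J$. Only those $J$ containing both $i$ and $j$ survive, and parametrising them as $J = K \sqcup \{i,j\}$ with $K$ a $(d{-}1)$-subset disjoint from $\{i,j\}$ produces the correct family of pairings $x_{iK} x_{jK}$.

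The main obstacle is the sign bookkeeping: the two Laplace signs must be reconciled with the skew-symmetry signs relating the sorted minor $\det A_{K \sqcup \{j\}}$ to the tuple-ordered Plücker coordinate $x_{jK}$ (in which $j$ is listed before the sorted tail $K$), and likewise for $\det A_{K \sqcup \{i\}}$ and $x_{iK}$. I would handle the cases $i < j$ and $i > j$ separately; in each, the position of $i$ in the sorted tuple $J \setminus \{j\}$ differs from its position in $J$ by a controlled shift of $0$ or $-1$, so that all signs telescope to a single global $-1$ per term. This gives $\det(\tilde A_i \tilde A_j^T) = - \sum_K x_{iK} x_{jK}$, which combined with $\delta_{ij} = 0$ yields the claim for $i \neq j$. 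The diagonal case $i = j$ is easier since the summands become squares and are therefore sign-free; the Cauchy--Binet sum partitions $\sum_I x_I^2$ into the contributions from $I \ni i$ and $I \not\ni i$, and the convention $x_{iK} = 0$ when $i \in K$ identifies $\sum_{I \ni i} x_I^2$ with $\sum_K x_{iK}^2$.

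As a final observation, both sides of the claimed identity are homogeneous of degree two in $X$, so the formula is well-defined on $\PP^{\binom{n}{d}-1}$ and is independent of the choice of spanning matrix $A$, consistent with the fact that the projector $P$ depends only on the subspace.
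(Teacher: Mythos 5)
Your proposal is correct, but it takes a genuinely different route from the paper. The paper also starts from $P=A^T(AA^T)^{-1}A$ and Cauchy--Binet for the denominator, but then handles the numerator globally: it rewrites $(\sum_I x_I^2)\,P = A^T\,\mathrm{adj}(AA^T)\,A$, identifies $\mathrm{adj}(AA^T)$ with the $(d-1)$st exterior power $\wedge_{d-1}A\cdot(\wedge_{d-1}A)^T$, and recognizes $A^T\wedge_{d-1}A$ as the cocircuit matrix $(x_{iK})$ via a single Laplace expansion, so that the whole matrix identity $(\sum_I x_I^2)P = XX^T$ drops out at once with the signs absorbed into the choice of isomorphism $\RR^d\simeq\wedge_{d-1}\RR^d$. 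You instead work entry by entry: appending the rows $e_i^T,e_j^T$, computing $\det(\tilde A_i\tilde A_j^T)$ once by the Schur complement, giving $\det(AA^T)(\delta_{ij}-p_{ij})$, and once by Cauchy--Binet over $(d{+}1)$-subsets, with Laplace expansion along the appended row. I checked your sign claim: for $i<j$ with $J=K\sqcup\{i,j\}$, the two cofactor signs contribute $(-1)^{a+b}$ with $a=m_i+1$, $b=m_j+2$ (where $m_i,m_j$ count elements of $K$ below $i,j$), while $x_{iK}x_{jK}=(-1)^{m_i+m_j}\det A_{\mathrm{sort}(iK)}\det A_{\mathrm{sort}(jK)}$, so each term is indeed $-x_{iK}x_{jK}$ and your identity $\det(\tilde A_i\tilde A_j^T)=-\sum_K x_{iK}x_{jK}$ holds; the diagonal case works as you say using $x_{iK}=0$ for $i\in K$. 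What the paper's argument buys is the structural factorization through the cocircuit matrix, which it reuses immediately (the corollary $P=d\,XX^T/\mathrm{trace}(X^TX)$ and the cocircuit remark) and which avoids any case analysis on signs; what your argument buys is elementarity (no compound/adjugate identity or exterior powers) plus the pleasant byproduct that the entries of $\mathrm{Id}_n-P$ are bordered determinants $\det(\tilde A_i\tilde A_j^T)/\det(AA^T)$. Both are complete and correct; if you write yours up, do carry out the sign computation explicitly rather than asserting the telescoping, since that is the only place where an error could hide.
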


\begin{proof}
Our point of departure is the formula for $P$ in (\ref{eq:projector}).
By the Cauchy-Binet formula,
$$ {\rm det}(A A^T) \,= \, \sum_I x_I^2 . $$
If we multiply the inverse of $A A^T$ by this sum of squares
then we obtain the adjoint of $A A^T$. This is a
matrix whose entries are the $(d-1) \times (d-1)$ minors
of $A A^T$. We can identify the adjoint with the
$(d-1)$st exterior power, using the
isomorphism $\RR^d \simeq \wedge_{d-1} \RR^d$
that sends  the basis vector $e_i$ to the basis vector 
$(-1)^i e_1 \wedge \cdots \wedge e_{i-1} \wedge e_{i+1} \wedge \cdots \wedge e_{d}$.
In symbols, we~have
$$ {\rm adj}(A A^T) \, = \, \wedge_{d-1} (A A^T) \,  = \,
\wedge_{d-1} A \cdot \wedge_{d-1} A^T  \, = \,
\wedge_{d-1} A \cdot (\wedge_{d-1} A)^T. $$
This implies that the scaled projection matrix equals
\begin{equation}
\label{eq:scaledproj}
(\sum_I x_I^2) \cdot P \, = \, A^T {\rm adj}(A A^T) A \,= \,
(A^T \wedge_{d-1} \! A ) \cdot (A^T \wedge_{d-1} \!A )^T. 
\end{equation}
It remains to analyze the  matrix 
$A^T \wedge_{d-1} \! A $. This matrix has format $n \times \binom{n}{d-1}$ and its rank is $d$.
We claim that its entry in row $i$ and column $K$ is equal to $x_{iK}$. Indeed,
$x_{iK}$ is the $d \times d$ minor of $A^T$ with row indices $iK$. The $(i,K)$ entry in the matrix product
$A^T \wedge_{d-1} \! A $ is computed by multiplying the $i$th row of $A^T$ with the $K$th column 
of $\wedge_{d-1} \! A$. This is precisely the Laplace expansion of the $iK$  minor of $A^T$
with respect to  $i$th row. This implies that the 
entry of (\ref{eq:scaledproj}) in row $i$ and column $j$ is equal to
$\sum_K x_{iK} \,x_{jK}$. This completes the proof.
 \end{proof}

\begin{remark}
The $n \times \binom{n}{d-1}$ matrix $ ( x_{iK} )$
is known as the {\em cocircuit matrix} of the linear space.
Its  entries are the Pl\"ucker coordinates and its image is precisely the
linear space.
If~$d=2$ and $n=5$ then the cocircuit matrix is the skew-symmetric $n \times n$ matrix $X$
we saw in (\ref{eq:skew5}). This generalizes to $d=2 $ and $n \geq 6$. For $d \geq 3$,
the  cocircuit matrix has more columns than rows.
Theorem \ref{thm:PPlucker} states in words that the
projection matrix of a linear space equals the
cocircuit matrix times its transpose, scaled by
the sum of squares of all Pl\"ucker coordinates.
\end{remark}

\begin{example}[$n=5,d=2$] 
In Example \ref{ex:52}, the $5 \times 5$ projection matrix of rank $2$  equals
\begin{equation}
\label{eq:P52} P \,\,=\,\, \frac{X X^T}{ \sum_{1 \leq i < j \leq 5} x_{ij}^2} \,\, = \,\, \frac{2\cdot XX^T}{{\rm trace}(X^TX)}. \end{equation}
For an algebraic geometer, this formula represents a birational isomorphism from
the Grassmannian to itself. The base locus, given by the
denominator, has no real points.
\end{example}

\begin{example}[$n=6,d=3$] 
The Grassmannian ${\rm Gr}(3,6)$ has dimension $9$. As a projective variety it
has degree $42$ in $\RR \PP^{19}$ and its ideal is generated by $35$ quadrics
in the $20$ Pl\"ucker coordinates $x_{ijk}$. The $15 $ cocircuits
of the subspace are the columns of the cocircuit matrix 
$$
X \,\, = \,\,  \begin{small} \begin{bmatrix}
          0  &             0 &   0           &   0          &   0           & x_{123} & x_{124}     & \,\,  \cdots  \,\,   & x_{146}    & x_{156} \\
          0  &  \!-x_{123} & \!-x_{124} &\! -x_{125} & -x_{126} &    0        &       0     & \cdots       & x_{246}   & x_{256} \\
  x_{123} &           0  &\! -x_{134} & \!-x_{135} & -x_{136} &    0         & \! -x_{234}  & \cdots & x_{346}   & x_{356}  \\
  x_{124} & x_{134}  &   0          & \!-x_{145} & -x_{146} & x_{234}  &      0            & \cdots   & 0  &           x_{456} \\
  x_{125} & x_{135}  & x_{145}  &   0          & -x_{156} & x_{235}  &  x_{245}        & \cdots   & -x_{456}    &  0 \\
  x_{126} & x_{136}  & x_{146}  & x_{156}  &   0          & x_{236}  &  x_{246}        & \cdots  & 0                &  0 \\
  \end{bmatrix}. \end{small}
$$
As an affine variety, the Grassmannian ${\rm Gr}(3,6)$ lives in $\RR^{21}$.
 This embedded manifold consists of all 
$6 \times 6$ symmetric matrices $P$ with $P^2 = P$ and ${\rm trace}(P) = 3$.
It has dimension $9$ and degree  $184$. Similar to (\ref{eq:P52}),
we can write the projection matrix $P$ in terms of the $20$ Pl\"ucker coordinates as
the matrix product
$X \cdot X^T$ divided by the sum of squares  $\sum_{1 \leq i < j < k \leq 6} x_{ijk}^2$.
\end{example}

This formula holds in general. Indeed, Theorem \ref{thm:PPlucker}
can always be rewritten as follows:

\begin{corollary}
The formula for
the projection matrix in terms of Pl\"ucker coordinates~is 
$$ P\,\,=\,\, d\cdot\frac{ XX^T}{{\rm trace}(X^TX)}. $$
Here $X = (x_{iK}) $ is the cocircuit matrix, which has format $n \times \binom{n}{d-1}$.
\end{corollary}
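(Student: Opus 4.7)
The plan is to deduce the corollary directly from Theorem~\ref{thm:PPlucker} by recognizing both the numerator and denominator of the formula for $p_{ij}$ in matrix form. The numerator $\sum_K x_{iK}x_{jK}$ is, by definition of matrix multiplication, the $(i,j)$ entry of $XX^T$, where $X = (x_{iK})$ is the cocircuit matrix. Hence the content of the corollary reduces to the scalar identity $\sum_I x_I^2 = \frac{1}{d}\,{\rm trace}(X^TX)$.

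To prove this identity, I would expand ${\rm trace}(X^TX) = \sum_{i,K} x_{iK}^2$, where $i$ ranges over $\{1,\ldots,n\}$ and $K$ over $(d-1)$-subsets of $\{1,\ldots,n\}$. Pairs with $i \in K$ contribute zero by skew-symmetry. The remaining pairs $(i,K)$ with $i \notin K$ are in bijection with pairs $(I,i)$ where $I$ is a $d$-subset and $i \in I$, via $I = \{i\}\cup K$. Each $d$-subset $I$ arises from exactly $d$ such pairs, and $x_{iK}^2 = x_I^2$, since any sign introduced by reordering $iK$ into increasing order is erased upon squaring. Combining these counts gives ${\rm trace}(X^TX) = d \cdot \sum_I x_I^2$. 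Substituting this denominator into Theorem~\ref{thm:PPlucker} yields the desired matrix formula.

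The only delicate point is the sign handling in the Pl\"ucker coordinates: $x_{iK}$ is defined via a skew-symmetric tensor, so $x_{iK}$ differs from the standard sorted-index coordinate $x_I$ by the sign of the permutation sending $iK$ to increasing order. This is precisely why passing through squared quantities is essential. As a consistency check, taking the trace of the resulting expression yields ${\rm trace}(P) = d\cdot {\rm trace}(XX^T)/{\rm trace}(X^TX) = d$, which agrees with the defining projector constraint ${\rm rank}(P) = {\rm trace}(P) = d$ recorded earlier in the section.
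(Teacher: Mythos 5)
Your proof is correct and follows exactly the route the paper intends: the paper simply asserts that Theorem~\ref{thm:PPlucker} ``can always be rewritten'' in this matrix form, and your identification of the numerator as $(XX^T)_{ij}$ together with the counting argument showing ${\rm trace}(X^TX)=\sum_{i,K}x_{iK}^2=d\sum_I x_I^2$ (each $d$-set $I$ contributing once for each of its $d$ elements, with signs erased by squaring and entries with $i\in K$ vanishing) is precisely the omitted verification. Your handling of the sign convention and the trace consistency check are both sound.
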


\noindent In conclusion, we have presented the formulas that
  connect the two lives of the Grassmannian.

\begin{example}\label{ex:cut space}
The cut space of an oriented graph $G$ with $n$ edges is the subspace 
of $\mathbb{R}^n$ spanned by vectors representing edge cuts. 
If $G$ is connected, then its dimension $d$ equals the number of vertices minus one.
Kirchhoff \cite{kirchhoff1847} gave a 
formula for the~projection $P$ onto the cut space in terms of  {\em spanning forests}, i.e., maximal acyclic subsets of edges.~We~have
\begin{equation}\label{eq:spanning forests}
p_{ij} \,\,=\,\, \frac{\sum_K (-1)^{\sigma_K(i,j)}}{\#\lbrace\text{spanning forests in $G$}\rbrace},
\end{equation}
where $K$ runs over  sets of edges such that $iK$ and $jK$ are spanning forests, and $\sigma_K(i,j)$ is $1$ if $ijK$ contains an oriented cycle which traverses both $i$ and $j$ along their orientation and is $0$ otherwise; 
see \cite{biggs1997}.
Using that any Pl\"{u}cker coordinate $x_I$ of the cut space is in $\lbrace 0,\pm 1\rbrace$ and is nonzero when $I$ is a spanning forest of $G$, equation (\ref{eq:spanning forests}) follows 
from Theorem~\ref{thm:PPlucker}.
\end{example}

\section{The squared Grassmannian}

The projection matrix $P$ representing a point in ${\rm Gr}(d,n)$
is symmetric of format $n \times n$ and has rank $d$.
It thus makes sense to examine the largest non-vanishing
principal minors of $P$.
For $I \in \binom{[n]}{d}$, let $P_I$ denote the square submatrix
of $P$ with row indices and column indices~$I$.

\begin{lemma} 
\label{lem:seven}
The $d {\times} d$ principal minors of $P$
are proportional to squared Pl\"ucker coordinates:
\begin{equation}
\label{eq:detPI}
{\rm det}(P_I) \,\, = \,\, \frac{x_I^2}{\sum_{J\in\binom{[n]}{d}} x_J^2}.
\end{equation}
\end{lemma}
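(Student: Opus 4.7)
The plan is to deduce the identity directly from the parametric representation $P = A^T(AA^T)^{-1}A$ that appears in~(\ref{eq:projector}), together with the Cauchy--Binet formula that was already invoked in the proof of Theorem \ref{thm:PPlucker}. Fix any $d \times n$ matrix $A$ of rank $d$ whose rows span the subspace represented by $P$. For a subset $I \in \binom{[n]}{d}$, let $A_I$ denote the $d \times d$ submatrix of $A$ with column indices $I$, so that by definition $x_I = \det(A_I)$.

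First I observe that the principal $d \times d$ submatrix $P_I$ decomposes cleanly in terms of $A_I$. Indeed, extracting rows and columns indexed by $I$ from $P = A^T(AA^T)^{-1}A$ amounts to extracting columns $I$ from the right factor $A$ and rows $I$ from the left factor $A^T$, which gives the factorization
\begin{equation*}
P_I \,\,=\,\, A_I^T \,(AA^T)^{-1}\, A_I.
\end{equation*}
Since this is a product of three $d \times d$ matrices, taking determinants is immediate:
\begin{equation*}
\det(P_I) \,\,=\,\, \det(A_I)^2 \cdot \det(AA^T)^{-1} \,\,=\,\, \frac{x_I^2}{\det(AA^T)}.
\end{equation*}

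Finally, the Cauchy--Binet formula applied to the product $AA^T$ gives $\det(AA^T) = \sum_{J \in \binom{[n]}{d}} \det(A_J)^2 = \sum_J x_J^2$, which yields~(\ref{eq:detPI}). The formula is manifestly independent of the choice of spanning matrix $A$, as both sides depend only on the subspace. There is no serious obstacle in this argument; the only small point to verify is the block identity $P_I = A_I^T(AA^T)^{-1}A_I$, and this is simply a restatement of the definition of submatrix applied to the outer factors $A^T$ and $A$.
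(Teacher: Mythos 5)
Your argument is correct and coincides with the paper's own proof: the same factorization $P_I = A_I^T (AA^T)^{-1} A_I$, multiplicativity of determinants for the product of three $d\times d$ matrices, and Cauchy--Binet for $\det(AA^T)=\sum_J x_J^2$. No gaps; nothing further is needed.
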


\begin{proof}
Let $A$ be any $d \times n$ matrix that satisfies (\ref{eq:projector}).
We write $A_I$ for the $d \times d$ submatrix of $A$ with column indices $I$.
Then (\ref{eq:projector}) implies
$P_I = (A_I)^T \cdot (A A^T)^{-1} \cdot A_I$.
This is a product of three $d \times d$ matrices, so its
determinant is the product of the three determinants. We find
$$
 \det(P_I) \,=\, \frac{\det(A_I)^2}{\det(AA^T)}\,=\,\frac{\det(A_I)^2}{\sum_{J}\det(A_J)^2}\,=\,\frac{x_I^2}{\sum_J x_J^2}.
$$
Here, $ I$ and $J$ are index sets in $\binom{[n]}{d}$.
The middle equation uses the Cauchy-Binet formula.
\end{proof}

Lemma \ref{lem:seven} offers a link between the
two lives of the Grassmannian. In the next section,~we 
shall see its role in probability and statistics. 
In the present section, it leads us to introduce the squared Grassmannian
${\rm sGr}(d,n)$  and to study its homogeneous
ideal~$\mathcal{I}({\rm sGr}(d,n))$.
\begin{definition}[Squared Grassmannian]
  The {\em squared Grassmannian} ${\rm sGr}(d,n)$ is the image of the Grassmannian ${\rm Gr}(d, n) \subset \mathbb{P}^{\binom{n}{d} - 1}$ in its Pl\"{u}cker embedding under the map
$$
    {\rm Gr}(d,n) \to \mathbb{P}^{\binom{n}{d} - 1}, \,\,
    (x_I)_{I \in \binom{[n]}{d}} \mapsto (x^2_I)_{I \in \binom{[n]}{d}}.
$$
\end{definition}
The two most basic invariants of any projective variety are its dimension and its degree.

\begin{proposition} The squared Grassmannian $\,{\rm sGr}(d,n)$ satisfies
$$ {\rm dim}({\rm sGr}(d,n)) \,=\,  d(n-d) \quad {\rm and} \quad
{\rm degree}({\rm sGr}(d,n)) \,=\, 2^{(d-1)(n-d-1)} \cdot {\rm degree}({\rm Gr}(d,n)). $$
\end{proposition}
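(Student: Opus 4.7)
My plan is to realize ${\rm sGr}(d,n)$ as the image of ${\rm Gr}(d,n)$ under the coordinate-wise squaring map $\varphi : \PP^{\binom{n}{d}-1} \to \PP^{\binom{n}{d}-1}$, $(x_I) \mapsto (x_I^2)$, and to exploit the fact that $\varphi$ is finite with $\varphi^* \mathcal{O}(1) = \mathcal{O}(2)$. Finiteness gives the dimension equality immediately: $\dim({\rm sGr}(d,n)) = \dim({\rm Gr}(d,n)) = d(n-d)$. For the degree, the standard projection formula yields
\[
\deg\bigl(\varphi|_{{\rm Gr}(d,n)}\bigr) \cdot \deg({\rm sGr}(d,n)) \,=\, 2^{d(n-d)} \cdot \deg({\rm Gr}(d,n)),
\]
so the task reduces to computing $m := \deg(\varphi|_{{\rm Gr}(d,n)})$, the size of a generic fiber.

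To find $m$, I fix a generic point $y = (y_I)$ in the image and a lift $(x_I) \in {\rm Gr}(d,n)$ with $x_I^2 = y_I$. Every other lift has the form $(\epsilon_I x_I)$ with $\epsilon \in \{\pm 1\}^{\binom{[n]}{d}}$, and two lifts coincide in projective space exactly when the $\epsilon_I$ differ by a global sign. Thus $m = |\Sigma|$, where $\Sigma \subseteq \{\pm 1\}^{\binom{[n]}{d}}/\{\pm 1\}$ consists of sign patterns $\epsilon$ for which $(\epsilon_I x_I)$ still satisfies all the Pl\"ucker relations.

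The crux is identifying $\Sigma$ with the group of column-sign changes. The torus $(\RR^\times)^n$ acts on ${\rm Gr}(d,n)$ by scaling columns of a representing matrix, inducing the action $\eta \cdot (x_I) = (\prod_{i \in I} \eta_i \cdot x_I)$ on Pl\"ucker coordinates. Restricting to $\eta \in \{\pm 1\}^n$ embeds into $\Sigma$ a subgroup of order $2^{n-1}$, since the global flip $\eta = (-1,\ldots,-1)$ acts as multiplication by $(-1)^d$ and is therefore trivial in $\PP$. To show these are all of $\Sigma$, apply the three-term Pl\"ucker relation
\[
x_{ijS}\, x_{klS} \,-\, x_{ikS}\, x_{jlS} \,+\, x_{ilS}\, x_{jkS} \,=\, 0
\]
to a generic Pl\"ucker vector: since any two of the three monomials are linearly independent at such a point, the relation can only survive the sign flip if $\epsilon_{ijS}\epsilon_{klS} = \epsilon_{ikS}\epsilon_{jlS} = \epsilon_{ilS}\epsilon_{jkS}$. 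A cocycle argument, propagating these equalities across all $4$-subsets $\{i,j,k,l\} \subseteq [n]$ and all admissible $(d-2)$-subsets $S$, then produces $\eta \in \{\pm 1\}^n$ with $\epsilon_I = \pm \prod_{i \in I} \eta_i$.

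Combining, $m = 2^{n-1}$, and substituting into the projection formula gives
\[
\deg({\rm sGr}(d,n)) \,=\, 2^{d(n-d) - (n-1)} \cdot \deg({\rm Gr}(d,n)) \,=\, 2^{(d-1)(n-d-1)} \cdot \deg({\rm Gr}(d,n)),
\]
as claimed. The main obstacle is the cocycle step: showing that every Pl\"ucker-preserving sign pattern factors through column signs is where one genuinely uses the combinatorial structure of the Pl\"ucker ideal, whereas the degree bookkeeping is automatic once $m$ is pinned down.
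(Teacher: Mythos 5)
Your proposal is correct and follows essentially the same route as the paper: the paper also gets the dimension from finiteness of the coordinate-squaring map and obtains the degree by factoring through the quadratic Veronese $\nu_2$ (which is exactly your projection-formula bookkeeping with $\varphi^*\mathcal{O}(1)=\mathcal{O}(2)$), then dividing $2^{d(n-d)}\deg{\rm Gr}(d,n)$ by the generic fiber size $2^{n-1}$. The only difference is that the paper simply asserts this fiber count as coming from independent sign changes of the $n$ columns of a representing matrix $A$ (citing its Remark~\ref{rmk:18}), whereas you justify the same count explicitly via the three-term Pl\"ucker relations and a cocycle argument, which is a standard and valid way to make that step rigorous.
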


\begin{proof}
The self-map of  $\PP^m$ given by squaring all coordinates is $2^m$-to-one.
It preserves the dimension of any subvariety, because all of its fibers are finite.
This yields the first equation.

 For~the second equation, we factor the squaring morphism
${\rm Gr}(d,n) \rightarrow {\rm sGr}(d,n)$ through the
quadratic Veronese map, which is an isomorphism from
${\rm Gr}(d,n) $ to its Veronese square $\nu_2({\rm Gr}(d,n))$.
Note that the degree of $\nu_2({\rm Gr}(d,n))$ is
$\,2^{d(n-d)}  \cdot {\rm degree}({\rm Gr}(d,n))$.
 The projection from $\nu_2({\rm Gr}(d,n))$ 
 onto ${\rm sGr}(d,n)$ 
 deletes all mixed coordinates.
This map is $2^{n-1}$-to-one since 
each fiber is given by switching the signs independently in
the $n$ columns of the matrix $A$; cf.~Remark \ref{rmk:18}. So, the degree of ${\rm sGr}(d,n)$ is
the degree of $\nu_2({\rm Gr}(d,n))$ divided by $2^{n-1}$.
\end{proof}

The familiar formula 
(cf.~\cite[Theorem 5.13]{MS})
for the degree of the Grassmannian implies:

\begin{corollary} An explicit formula for the degree of the squared Grassmannian is
$$ {\rm degree}({\rm sGr}(d,n)) \,\, = \,\,
\frac{2^{(d-1)(n-d-1)} \cdot (d(n-d))!  }{ \prod_{j=1}^d j (j+1) (j+2) \cdots (j+n-d-1)}. 
$$
\end{corollary}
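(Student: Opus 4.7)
The plan is to substitute. By the Proposition just above,
$$ {\rm degree}({\rm sGr}(d,n)) \,=\, 2^{(d-1)(n-d-1)} \cdot {\rm degree}({\rm Gr}(d,n)), $$
so the corollary reduces to inserting the classical formula for the degree of the Grassmannian in its Pl\"ucker embedding. This formula is the one cited in the text as \cite[Theorem 5.13]{MS}:
$$ {\rm degree}({\rm Gr}(d,n)) \,=\, \frac{(d(n-d))!}{\prod_{j=1}^d j(j+1)(j+2)\cdots(j+n-d-1)}. $$
Multiplying through by $2^{(d-1)(n-d-1)}$ yields exactly the expression in the statement. That is the whole proof.

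For context, the standard route to the Grassmannian degree formula is Schubert calculus: write $\,{\rm degree}({\rm Gr}(d,n)) = \int_{{\rm Gr}(d,n)} \sigma_1^{d(n-d)}$, expand the power of the hyperplane class by iterated Pieri, and observe that the result counts standard Young tableaux of rectangular shape $d \times (n-d)$. The hook-length formula evaluates this count as $(d(n-d))!$ divided by the product of hook lengths, and reading off the hooks of the $d \times (n-d)$ rectangle row by row produces the denominator $\prod_{j=1}^d j(j+1)\cdots(j+n-d-1)$. This is entirely classical and need not be reproduced.

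I do not anticipate any obstacle, since all the substance of the corollary lies in the preceding proposition, whose proof used the Veronese factorization of the squaring morphism. The only thing to verify carefully is that the hook-length product for the $d \times (n-d)$ rectangle reindexes to the factored form shown: the hook at cell $(i,j)$ equals $n-i-j+1$, so the $i$-th row contributes $(n-i)!/(d-i)!$, and the substitution $j = d - i + 1$ converts $\prod_i (n-i)!/(d-i)!$ into $\prod_j (j+n-d-1)!/(j-1)! = \prod_j j(j+1)\cdots(j+n-d-1)$. This matches the denominator in the statement, and the proof is complete.
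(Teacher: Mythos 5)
Your proof is correct and matches the paper's approach: the corollary is obtained simply by substituting the classical formula for $\deg{\rm Gr}(d,n)$ from \cite[Theorem 5.13]{MS} into the preceding proposition, which is all the paper does as well. Your extra verification of the hook-length reindexing for the $d \times (n-d)$ rectangle is accurate but not needed beyond the citation.
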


For the special case $d=2$, these degrees are scalings of the Catalan numbers:
\begin{equation}
\label{eq:catalan} \begin{matrix}
 {\rm degree}({\rm sGr}(2,n)) \,\, = \,\,
\frac{2^{n-3}}{n-1} \binom{2n-4}{n-2} \,\, = \,\, 2^{n-3} \cdot C_{n-2}.  \end{matrix}
\end{equation}

We now turn to the ideal of the squared Grassmannian, beginning with the case $d=2$.
The squared Pl\"ucker coordinates are denoted by $q_{ij} = x_{ij}^2$ for $1 \leq i < j \leq n$.
We write these coordinates as the entries of a symmetric $n \times n$ matrix
that has zeros on the main diagonal.

\begin{theorem} \label{thm:conca}
The prime ideal $\,\mathcal{I}({\rm sGr}(2,n))$ is generated by the $4 \times 4$ minors
of the matrix
\begin{equation}
\label{eq:sym44} 
Q \,\, = \,\,\begin{bmatrix}
0 & q_{12} & q_{13} & \cdots & q_{1n} \\
q_{12} & 0  & q_{23} & \cdots & q_{2n} \\
q_{13} & q_{23} & 0 & \cdots & q_{3n} \\
 \vdots & \vdots & \vdots &  \ddots &   \vdots \\
  q_{1n} & q_{2n} & q_{3n} & \cdots  & 0
  \end{bmatrix}.
  \end{equation}
  \end{theorem}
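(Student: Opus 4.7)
The plan is to prove $I_4(Q) = \mathcal{I}(\mathrm{sGr}(2,n))$ in three stages: first show that the $4 \times 4$ minors lie in the defining ideal; next show $V(I_4(Q))$ is irreducible of dimension $2n-4$, yielding set-theoretic equality with $\mathrm{sGr}(2,n)$; finally, upgrade to equality of ideals via primality.

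For the containment $I_4(Q) \subseteq \mathcal{I}(\mathrm{sGr}(2,n))$, I would parametrize a point of $\mathrm{Gr}(2, n)$ as the row span of a matrix with rows $u, v \in \RR^n$, so $x_{ij} = u_i v_j - u_j v_i$. Squaring gives
\[
q_{ij} \, = \, x_{ij}^2 \, = \, a_i b_j + b_i a_j - 2 c_i c_j,
\]
where $a_i = u_i^2$, $b_i = v_i^2$, $c_i = u_i v_i$. This exhibits $Q = a b^T + b a^T - 2 c c^T$ as having rank at most three, so every $4 \times 4$ minor of $Q$ vanishes on $\mathrm{sGr}(2, n)$.

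For the set-theoretic equality, note that $V(I_4(Q))$ is the variety of $n \times n$ complex symmetric matrices with zero diagonal and rank at most $3$. Over $\CC$, any such matrix admits a decomposition $Q = \sum_{k=1}^{3} m_k m_k^T$ with $\sum_{k} m_{k,i}^2 = 0$ for each $i$. The set of triples $(m_1, m_2, m_3) \in (\CC^n)^3$ satisfying these $n$ decoupled constraints is a product of $n$ irreducible affine quadric cones $\{x^2+y^2+z^2=0\} \subset \CC^3$, and hence an irreducible variety of dimension $2n$ (each factor is irreducible because the rank-$3$ form $x^2+y^2+z^2$ cannot factor as a product of two linear forms). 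The generic fiber of the map $(m_1, m_2, m_3) \mapsto Q$ is an $O(3, \CC)$-orbit of dimension $3$, so $V(I_4(Q))$ is irreducible of projective dimension $2n - 4 = \dim \mathrm{sGr}(2, n)$. Combined with the first step, this forces $V(I_4(Q)) = \mathrm{sGr}(2, n)$ as varieties.

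To upgrade set-theoretic equality to ideal-theoretic equality, I would invoke J\'ozefiak's theorem on the primality and Cohen--Macaulayness of the ideal of $(r+1)$-minors of a generic symmetric matrix, combined with a regular-sequence argument showing that the $n$ diagonal entries of the generic symmetric matrix form a regular sequence modulo $I_4$ (whose length matches the drop in dimension computed above), so that setting them to zero preserves primality. The main obstacle will be this last step, since primality of minor ideals is not automatically preserved under arbitrary linear specialization; the other two steps are essentially elementary once the rank-three factorization and the product structure of the diagonal constraints are identified.
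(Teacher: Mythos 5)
Your first two steps are sound and run parallel to the paper's argument: the containment $I_4(Q)\subseteq\mathcal{I}({\rm sGr}(2,n))$ via the rank-three decomposition $Q=ab^T+ba^T-2cc^T$ is exactly the paper's computation, and your irreducibility-plus-dimension count for the zero-diagonal rank-$\leq 3$ locus (parametrizing it by $3\times n$ matrices with isotropic columns, so the source is a product of $n$ irreducible quadric cones of dimension $2n$, with generic fibers being $O(3,\CC)$-orbits of dimension $3$) is a legitimate and somewhat different route to the set-theoretic equality ${\rm sGr}(2,n)=V(I_4(Q))$; the paper instead deduces irreducibility from primality of the ideal and gets the dimension from a regular-sequence argument.

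The genuine gap is your third step, and it is the heart of the theorem. Knowing that the diagonal entries form a regular sequence modulo the generic determinantal ideal $J$ (which does follow from J\'ozefiak/Kutz Cohen--Macaulayness plus your dimension count) only gives that $\CC[Q]/(J+\langle q_{11},\dots,q_{nn}\rangle)$ is Cohen--Macaulay, hence unmixed with no embedded primes; it does not rule out non-reducedness along the unique component. Cutting a prime Cohen--Macaulay ring by a regular sequence can destroy reducedness (e.g.\ the cone $\CC[x,y,z]/(xz-y^2)$ modulo the regular element $x$), so ``regular sequence, hence primality is preserved'' is a false implication, as you yourself flag without resolving it. Consequently your argument only shows that $I_4(Q)$ is primary with radical $\mathcal{I}({\rm sGr}(2,n))$, not that it equals the prime ideal. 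The paper closes exactly this gap by invariant theory: by the First and Second Fundamental Theorems for $O(3)$ \cite{DP}, $\CC[Q]/J\cong\CC[Y^TY]=\CC[Y]^{O(3)}$ is a direct summand of $\CC[Y]$; the extension of the diagonal ideal to $\CC[Y]$ is generated by the irreducible quadrics $y_{1i}^2+y_{2i}^2+y_{3i}^2$ in disjoint sets of variables and is therefore prime, and contracting back along the direct summand shows $J+\langle q_{11},\dots,q_{nn}\rangle$ is prime. Alternatively, within your own framework you could finish by invoking Serre's criterion: Cohen--Macaulayness gives $(S_1)$, so it would suffice to verify generic reducedness $(R_0)$, e.g.\ by a Jacobian-rank computation at a single point of ${\rm sGr}(2,n)$; without some such additional input the proof is incomplete.
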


\begin{proof}
  Let $Q$ be a generic symmetric $n \times n$ matrix.
  Let $J$ be the ideal generated by the $ 4 \times 4$ minors of $Q$, and let $I$ be the ideal $\langle q_{11}, \ldots, q_{nn}\rangle $, in the polynomial ring $\CC[Q]$  with complex coefficients.  
  Then $V(I + J) = V(I) \cap V(J)$ is the variety of $n \times n$ matrices of rank at most $3$ with zeros on the diagonal. 
  We will show that $I + J$ is prime, that its projective variety contains ${\rm sGr}(2, n)$, and that the projective varieties have the same dimension. 

  We first make some observations about the ring $\CC[Q]/J$. 
  We will show that $\CC[Q]/J \cong \CC[Y^T Y]$ where $Y$ is a generic $3 \times n$ matrix. 
  This isomorphism is the map $[q_{ij}] \mapsto (Y^T Y)_{ij}$, where $[q_{ij}]$ is the image of $q_{ij}$ under the quotient.
  The fact that $\CC[Y^T Y]$ is the image of this map is then clear. 
  By the first isomorphism theorem for rings, it is then sufficient to show that $J$ is equal to the kernel of this map.
  The containment of $J$ into the kernel is relatively straightforward: the image of a $4 \times 4$ minor of $Q$ in $\CC[Y^T Y]$ is a $4 \times 4$ minor of $Y^TY$, which is zero as $Y^TY$ has rank at most $3$.
  The containment of the kernel into $J$ is the Second Fundamental Theorem of Invariant Theory for the orthogonal group; see \cite[Theorem 5.7]{DP}.  
  
  Now given some $3 \times n$ matrix $Y$, the product $Y^T Y$ is invariant under the action of the orthogonal group 
  $O(3) = O(3,\CC)$ on $Y$ by matrix multiplication on the left. 
  Thus we have $$\CC[Q]/J \,\cong \,\CC[Y^T Y] \,=\, \CC[Y]^{O(3)} \,\subset\, \CC[Y],$$ where $\CC[Y]^{O(3)}$ is the invariant ring of the action of $O(3)$ on $Y$ by left multiplication.
  We have justified the inclusion $\CC[Y^TY] \subseteq \CC[Y]^{O(3)}$.
  The other inclusion is given by the First Fundamental Theorem of Invariant Theory for the orthogonal group; see \cite[Theorem 5.6]{DP}. 
  As $O(3)$ is linearly reductive in characteristic zero,
   $\CC[Y]^{O(3)} = \CC[Y^TY]$ is a direct summand of $\CC[Y]$.
  In summary, $\CC[Q]/J$ is isomorphic to $\CC[Y^TY]$, which is a direct summand of $\CC[Y]$. 

  We now turn to the image of $(I + J)/J$ under this isomorphism.
  Recall that $\{[q_{11}], \ldots, [q_{nn}]\}$ generates $(I + J)/J$.
  The image of $(I + J)/J$ in $\CC[Y^TY] $ is generated by $\{(Y^TY)_{11},  \ldots, (Y^TY)_{nn}\}$.
  Call this ideal $I_{Y^TY} = \langle (Y^TY)_{11},  \ldots, (Y^TY)_{nn} \rangle \subseteq \CC[Y^TY]$.
  We now turn to the extension of this ideal to $\CC[Y]$, which we denote 
  $I_{Y^TY} \CC[Y] = \langle (Y^TY)_{11},  \ldots, (Y^TY)_{nn} \rangle \subseteq \CC[Y]$. 
  For all $i$, the quadric $(Y^TY)_{ii} = y_{1i}^2 + y_{2i}^2 + y_{3i}^2$ is irreducible.
  As the $(Y^TY)_{ii}$ use disjoint sets of variables, 
  we conclude that $I_{Y^TY} \CC[Y]$  is prime.

  We now show $I_{Y^TY}$ is prime. 
  Since $\CC[Y^TY]$ is a direct summand of $\CC[Y]$, we have $I_{Y^TY} = I_{Y^TY} \CC[Y] \cap \CC[Y^TY]$.
  The inclusion $\subseteq$
  holds, because $I_{Y^TY} \CC[Y] \cap \CC[Y^TY]$ is the contraction of the extension of $I_{Y^TY}$.
  For the inclusion $\supseteq$, suppose $a = \sum_{i=1}^m a_i b_i \in~ I_{Y^TY} \CC[Y] \cap \CC[Y^TY]$
  where $b_i \in \CC[Y]$ and $a_i \in I_{Y^TY}$.
  The $\CC[Y^TY]$-module homomorphisms $\CC[Y^TY] \xhookrightarrow{} \CC[Y] \xrightarrow{\pi} \CC[Y^TY]$ compose to the identity, so 
  $a  = \pi(a) = \sum_{i=1}^m a_i \pi(b_i) \in I_{Y^TY}$.
  This proves the other containment.
  To conclude that $I_{Y^TY}$ is prime, recall that $I_{Y^TY}$ is the inverse image of $I_{Y^TY} \CC[Y]$ under the map $\CC[Y^TY] \xhookrightarrow{} \CC[Y]$, and that
   inverse images of primes are prime.

  We now show that $V(I + J)$ has the correct dimension. 
  The dimension of the projective variety $V(J)$ is $3n-4$ because
  any symmetric matrix of rank $\leq 3$ is determined by the upper triangle of the
  upper left $3 \times 3$ block and the $3  (n-3)$ entries in the upper right corner.
  To find the dimension of $V(I + J)$, we note that
  $[q_{11}], \ldots, [q_{nn}]$ is a regular sequence in
  the ring $\CC[Q]/J$.
  One proves this by applying the arguments above to show that
  $\langle [q_{11}], \ldots, [q_{ii}] \rangle$ is prime for any $i$. 
  Therefore, the dimension of the projective variety
$V(I+ J)$ is $(3n - 4) - n = 2n - 4$. In particular,
$V(I+J)$ has the same dimension as the
 Grassmannian ${\rm Gr}(2,n)$.

 We now argue that the squared Grassmannian
   ${\rm sGr}(2,n) $ is contained in $V(I + J)$.
  For any point $q$ in ${\rm sGr}(2,n)$, 
  there is a
$2 \times n$ matrix $A = (a_{kl})$ such that
$q_{ij} = a_{1i}^2 a_{2j}^2 - 2 a_{1i} a_{2i} a_{1j} a_{2j} + a_{1j}^2 a_{2i}^2$ for $1 \leq i < j \leq n$.
Hence, $Q$ has zeros on the diagonal and is a sum of three matrices of rank one, meaning that
its $ 4 \times 4$ minors vanish. This shows that $q \in V(I + J)$.

Because $V(I + J)$ has the same dimension as the squared Grassmannian, ${\rm sGr}(2,n) \subseteq V(I + J)$, and both varieties are irreducible, it follows that ${\rm sGr}(2,n) = V(I + J)$.
Finally, because $I + J$ is prime, we may apply the Nullstellensatz to see that $\mathcal{I}({\rm sGr}(2,n)) = I + J$. 
\end{proof}

\begin{example}[$d=2, n \leq 5$]
The Grassmannian ${\rm Gr}(2,4)$ is defined in $\PP^5$ by the quadric
$ x_{12} x_{34} - x_{13} x_{24} + x_{14} x_{23} $.
By setting $q_{ij} = x_{ij}^2$ and eliminating the $x$-variables, we obtain
 \begin{equation}
 \label{eq:qquartic} q_{12}^2 q_{34}^2+q_{13}^2 q_{24}^2+q_{14}^2 q_{23}^2
 -2 q_{12} q_{13} q_{24} q_{34}-2 q_{12} q_{14} q_{23} q_{34}
 - 2 q_{13} q_{14} q_{23} q_{24}. \end{equation}
 This  quartic is the determinant of a
 symmetric $ 4 \times 4$  matrix $(q_{ij})$ with zeros
 on the diagonal. 
 
The squared Grassmannian ${\rm sGr}(2,5)$  has dimension
 $6$ in~$\PP^9$. Its degree is $20$, by (\ref{eq:catalan}).
The ideal of ${\rm sGr}(2,5)$ is generated by $15$ quartics, namely the 
$4 \times 4$ minors in Theorem~\ref{thm:conca}.
\end{example}

\begin{conjecture} \label{conj:sGrQuartics}
For all $n \geq d \geq 2$, the
prime ideal $\mathcal{I}({\rm sGr}(d,n))$ is generated by quartics.
\end{conjecture}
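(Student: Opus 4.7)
The plan is to generalize the strategy of Theorem~\ref{thm:conca}. For $d=2$, the squared Plücker coordinates $q_{ij}=x_{ij}^2$ were identified with the off-diagonal entries of a generic symmetric rank-$\leq 3$ matrix, the quartic generators were its $4\times 4$ minors, and primality followed from the First and Second Fundamental Theorems for $O(3)$. For general $d$, I would seek an enveloping variety $W_{d,n}\subset \PP^{\binom{n}{d}-1}$ cut out by explicit quartics in the $q_I=x_I^2$, together with an invariant-theoretic model certifying primality of its ideal.

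Three natural sources of quartics in the $q_I$ vanishing on ${\rm sGr}(d,n)$ should be assembled. First, each three-term Plücker relation $p(x)=x_Ax_B-x_Cx_D+x_Ex_F$ gives rise to quartic identities in the $q_I$ after one forms products $p(x)\cdot p'(x)$ with Plücker relations $p'$ sharing suitable monomials, so that the sign-ambiguous cross terms $\pm\sqrt{q_Aq_Bq_Cq_D}$ cancel in pairs. Second, Lemma~\ref{lem:seven} realizes the $q_I$ as the $d\times d$ principal minors of a projection matrix, so any quartic among principal minors of symmetric matrices, in the spirit of Oeding's hyperdeterminantal relations, pulls back to a quartic on ${\rm sGr}(d,n)$. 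Third, the Plücker ideal sits inside $\mathrm{Sym}^2(\wedge^d\CC^n)$ and its restriction to coordinate diagonals yields further quartics.

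The proof would then proceed in four steps: (a) define $W_{d,n}$ by the quartics from the sources above; (b) identify the coordinate ring $\CC[q_I]/\mathcal{I}(W_{d,n})$ with a suitable invariant ring generalizing $\CC[Y]^{O(3)}$, and conclude primality; (c) verify $\dim W_{d,n}=d(n-d)$ by a direct count or by exhibiting a rational parametrization; (d) conclude $W_{d,n}={\rm sGr}(d,n)$ by irreducibility and dimension. The main obstacle is step~(b): the classical invariant theory of $O(3)$ that powered the $d=2$ case has no clean analogue for the plethysm $\mathrm{Sym}^2(\wedge^d\CC^n)$ when $d\geq 3$, and a priori Oeding-style quartics alone need not cut out an irreducible variety of the correct dimension. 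A realistic backup would be to verify the conjecture computationally for $(d,n)=(3,6)$ and $(3,7)$ using the \texttt{MathRepo} resources, extract the combinatorial type of the minimal quartic generators (perhaps via matching fields or SAGBI degenerations of the Plücker ideal), and attempt an inductive proof exploiting the natural embedding ${\rm sGr}(d,n)\hookrightarrow {\rm sGr}(d,n+1)$.
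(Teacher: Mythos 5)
The statement you are trying to prove is a conjecture in the paper: the authors give no proof of it. They verify it computationally for $(d,n)=(3,6)$ with {\tt Macaulay2}, and they cite Al Ahmadieh--Vinzant \cite[Corollary 6.4]{AV} only for the weaker, set-theoretic statement that ${\rm sGr}(d,n)$ is cut out by quartics coming from the $2\times 2\times 2$ hyperdeterminant. So there is no paper proof to compare against, and your text is not a proof either: it is a research plan whose decisive step you yourself flag as unresolved.

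Two gaps are concrete. First, step (b) of your outline is exactly the missing idea. The $d=2$ argument of Theorem~\ref{thm:conca} works because the $q_{ij}$ are literally the off-diagonal entries of a rank-$\leq 3$ Gram matrix $Y^TY$, so the First and Second Fundamental Theorems for $O(3)$ give both the generators (the $4\times 4$ minors) and, via the direct-summand property, primality of the ideal. For $d\geq 3$ the squared Pl\"ucker coordinates are $d\times d$ principal minors of a projection matrix (Lemma~\ref{lem:seven}), not entries of a Gram matrix, and you offer no invariant-theoretic model replacing $\CC[Y]^{O(3)}$; without it neither primality of your quartic ideal nor generation is established. Second, even if steps (a)--(d) were carried out, they would only show that your quartics define ${\rm sGr}(d,n)$ as a set (an irreducible variety of the right dimension containing it), which is precisely the Al Ahmadieh--Vinzant result already quoted in the paper; the conjecture asks that the \emph{prime ideal} $\mathcal{I}({\rm sGr}(d,n))$ be generated by quartics, i.e.\ that the ideal generated by your quartics is itself prime (or at least saturated to the prime in degree-preserving fashion), and nothing in the outline addresses the difference between cutting out the variety and generating its ideal. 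The backup plan (computations for $(3,6)$, $(3,7)$ plus induction along ${\rm sGr}(d,n)\hookrightarrow {\rm sGr}(d,n+1)$) supplies no mechanism by which degree-four generation would propagate from $n$ to $n+1$, so the statement remains open after your proposal, just as it is in the paper.
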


We verified this conjecture for $d=3,n=6$ using {\tt Macaulay2} \cite{M2}.
The variety ${\rm sGr}(3,6)$ has dimension $9$ and degree
$672$ in $\PP^{19}$. Its ideal is minimally generated by 
$285$ quartics.

After the submission of this article, we learned
that the squared Grassmannian had already been studied in a different
context by Al Ahmadieh and Vinzant in \cite[Section 6.2]{AV}.
Note that 
the quartic in (\ref{eq:qquartic}) appears explicitly in \cite[Example 6.5]{AV}.
Most importantly, it is proved in \cite[Corollary 6.4]{AV}
that ${\rm sGr}(d,n)$ is cut out by
quartic polynomials that are derived from the $2 \times 2 \times 2$ hyperdeterminant.
We can therefore state their result as follows.

\begin{theorem}[Al Ahmadieh -- Vinzant]
The set-theoretic version of Conjecture \ref{conj:sGrQuartics} is true.
\end{theorem}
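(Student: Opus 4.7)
The plan is to write down explicit quartics in the squared coordinates $q_I = x_I^2$ arising from three-term Plücker relations, identify them with specializations of the $2 \times 2 \times 2$ Cayley hyperdeterminant, verify that they vanish on ${\rm sGr}(d,n)$, and prove the converse set-theoretically via a sign-lifting argument on an affine chart.

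For the quartics, I would use every three-term Grassmann--Plücker relation $x_A x_B - x_C x_D + x_E x_F = 0$. Setting $a = x_A x_B$, $b = x_C x_D$, $c = x_E x_F$, the identity $a - b + c = 0$ rearranges to $(a^2 + c^2 - b^2)^2 = 4 a^2 c^2$. Passing to $q_I = x_I^2$ gives the quartic
\begin{equation*}
\Phi \;:=\; (q_A q_B + q_E q_F - q_C q_D)^2 \,-\, 4\, q_A q_B\, q_E q_F,
\end{equation*}
which over $\RR$ factors as
\begin{equation*}
\Phi \,=\, \prod_{\epsilon_2,\epsilon_3 \in \{\pm 1\}} \bigl( \sqrt{q_A q_B} \,+\, \epsilon_2 \sqrt{q_C q_D} \,+\, \epsilon_3 \sqrt{q_E q_F}\bigr).
\end{equation*}
Up to relabeling, $\Phi$ is the Cayley $2 \times 2 \times 2$ hyperdeterminant evaluated on the tensor whose three pairs of parallel $2 \times 2$ slices encode the Plücker variables $(x_A, x_B)$, $(x_C, x_D)$, $(x_E, x_F)$. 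Vanishing of every such $\Phi$ on ${\rm sGr}(d,n)$ is immediate from the derivation.

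For sufficiency, suppose $q$ satisfies every such $\Phi$. Pick $I_0$ with $q_{I_0} \neq 0$, normalize $q_{I_0} = 1$, and set $x_{I_0} := 1$. The standard affine chart $U_{I_0}$ of ${\rm Gr}(d,n)$ is freely parametrized by the $d(n-d)$ \emph{neighbor} Plücker coordinates $x_J$ with $|J \cap I_0| = d-1$, and every other Plücker coordinate is a polynomial in these. Choose signs $\epsilon_J \in \{\pm 1\}$, set $x_J := \epsilon_J \sqrt{q_J}$, and define every remaining $x_I$ via the Grassmannian chart parametrization. The resulting vector lies in ${\rm Gr}(d,n)$ by construction, so $(x_I^2)_I \in {\rm sGr}(d,n)$. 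What remains is to show that the $\epsilon_J$'s can be chosen so that $x_I^2 = q_I$ for every $I$.

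The main obstacle is the global compatibility of these signs: in general $x_I^2$ is a signed sum of square-root terms in the $\sqrt{q_J}$'s, and only particular sign patterns reproduce the target value $q_I$. Each vanishing $\Phi$ permits exactly one of four sign patterns for its three-term relation, and these local patterns must be simultaneously realizable. I would argue this through the sign-flip action: the group $(\ZZ/2)^n$ acts on ${\rm Gr}(d,n)$ by column sign-flips of the underlying $d \times n$ matrix $A$, and its $2^{n-1}$ orbits on Plücker vectors are precisely the fibers of $\phi \colon {\rm Gr}(d,n) \to {\rm sGr}(d,n)$ (cf.\ the degree computation above). Once the $\epsilon_J$ are fixed, the chart parametrization forces the signs of every other $x_I$, and the assumption that every $\Phi$ vanishes at $q$ amounts exactly to the statement that these forced signs yield $x_I^2 = q_I$ for every $I$. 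This produces a lift $x \in \phi^{-1}(q)$, so $q \in {\rm sGr}(d,n)$ and the set-theoretic equality ${\rm sGr}(d,n) = V(\{\Phi\})$ follows.
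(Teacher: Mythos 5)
Your quartics come only from three\-/term Pl\"ucker relations, and that family is too small to cut out ${\rm sGr}(d,n)$ set-theoretically once $d \geq 3$; this is a genuine gap, not a technicality. Take $n=6$, $d=3$ and the point $q$ with $q_{123}=q_{456}=1$ and all other coordinates zero. Every monomial of every one of your quartics $\Phi$ is a product $q_{Sab}\,q_{Scd}$ in which the two index sets share the common $(d-2)$-set $S$ of the underlying three-term relation; since $123$ and $456$ are disjoint, all such products vanish at $q$, so every $\Phi$ vanishes at $q$. Yet $q \notin {\rm sGr}(3,6)$: any preimage $x$ under squaring would have only $x_{123},x_{456}$ nonzero and would violate the four-term Pl\"ucker relation $x_{123}x_{456}-x_{124}x_{356}+x_{125}x_{346}-x_{126}x_{345}=0$. (This mirrors the classical fact that the three-term relations do not define ${\rm Gr}(3,6)$ set-theoretically.) The quartics of Al Ahmadieh and Vinzant \cite{AV} form a much larger family --- essentially the full module of translates of the $2\times 2\times 2$ hyperdeterminant, in the spirit of the Holtz--Sturmfels/Oeding work on principal minors --- and enlarging the family in this way is exactly what makes the set-theoretic statement true; your $\Phi$'s are only the ``coordinate'' members of that family.

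Independently of this, the converse direction of your argument is asserted rather than proved: the sentence that the vanishing of every $\Phi$ ``amounts exactly to the statement that these forced signs yield $x_I^2=q_I$ for every $I$'' is the entire content of the theorem. Each vanishing $\Phi$ constrains signs only locally, and it permits several sign patterns whenever some of the $q$'s vanish --- precisely the degenerate strata where the counterexample above lives. You give no mechanism for choosing the $\epsilon_J$ so that all constraints hold simultaneously, nor for propagating $x_I^2=q_I$ from the chart neighbors to general $I$, which necessarily involves Pl\"ucker relations with more than three terms. Note also that the paper itself does not prove this theorem; it cites \cite[Corollary 6.4]{AV}, whose proof is organized around exactly these two difficulties. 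For $d=2$ your construction is on firmer ground, since all Pl\"ucker relations are three-term and your $\Phi$'s are the principal $4\times 4$ minors of the matrix $Q$ in Theorem \ref{thm:conca}, but even there the global sign-lifting step requires an actual argument.
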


\section{Statistical models}

In this section, we view the Grassmannian as a discrete statistical model
whose state space is the set $\binom{[n]}{d}$ of $d$-element subsets of
$[n] = \{1,2,\ldots,n\}$.
We are aware of three distinct formulations of such an 
algebraic statistics model
that have appeared in the literature.

\smallskip

First, there is the positive Grassmannian ${\rm Gr}(d,n)_{> 0}$, which is
defined by requiring that all Pl\"ucker coordinates $x_I$ are positive.
This semialgebraic set plays a prominent role at the interface
of combinatorics and physics \cite{lauren22}. 
This is naturally a statistical model, via the usual
identification of the
positive projective space $\RR \PP^{\binom{n}{d}-1}_{>0}$ 
with the probability simplex
$\Delta_{\binom{n}{d}-1}$.
In this model, the probability of  observing a
$d$-set $I = \{i_1,i_2,\ldots,i_d\}$ equals
$\,x_I\,/\sum_{J} x_J$.

\smallskip

The second model is the configuration space $X(d,n)$
which is obtained from ${\rm Gr}(d,n)_{> 0}$
by taking the quotient modulo the natural torus
action by the multiplicative group $\RR^n_{>0}$.
This model plays a prominent role in the study of
scattering amplitudes; see
\cite{ST}. In the special case $d=2$, this Grassmannian
model is the moduli space $\mathcal{M}_{0,n}$
of $n$ distinct labeled points on the line $\RR \PP^1$.
This is a linear model, whose likelihood geometry is well understood.

\smallskip

In this section, we focus on a third statistical model, 
namely the squared Grassmannian ${\rm sGr}(d,n)$
from Section 4. In this model,
the probability of  observing a
$d$-set $I = \{i_1,i_2,\ldots,i_d\}$ equals
$\,q_I = x_I^2\,/\sum_{J} x_J^2$.
We shall discuss the probabilistic meaning of this in some detail.

First, however, we show how
these three models differ, by comparing 
their maximum likelihood (ML) degrees for $d \leq 3$.
Recall that the ML degree of a model is the number of complex critical points 
of the log-likelihood function for generic~data; see, e.g.,~\cite{FSZ, HKS, ST}.

\begin{theorem} \label{thm:MLD}
The ML degrees of the three models on small Grassmannians are as follows:
\begin{equation}
\label{eq:MLD2}
\begin{matrix} 
d=2 & &  n=4 & n=5 & n = 6 & n=7 & n=8 & n = 9\\
\hbox{positive Grassmannian} && 4 &  22 & 156 &  1368 & 14400 & 177840 & \\
\hbox{squared Grassmannian} && 3 & 12 & 60 & 360 & 2520 & 20160 \\
\hbox{moduli space $\mathcal{M}_{0,n}$} && 1 & 2 & 6  & 24 & 120 & 720 \\
\end{matrix}
\end{equation}
\begin{equation}
\label{eq:MLD3}
\begin{matrix} 
d=3 & &  n=5 & n = 6 & n=7 & n=8 \\
\hbox{positive Grassmannian} &&   22 & 1937 &  \geq 499976 & ??  \\
\hbox{squared Grassmannian} &&  12 & 552 & 73440 &  ?? \\
\hbox{moduli space $X(3,n)$} &&  2 & 26  &  1272  & 188112  \\
\end{matrix}
\end{equation}
\end{theorem}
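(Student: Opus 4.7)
My plan is to verify each entry of the tables (\ref{eq:MLD2}) and (\ref{eq:MLD3}) by computing the number of complex critical points of the corresponding log-likelihood function, via a combination of closed-form identifications, symbolic algebra, and numerical homotopy continuation in the style of \cite{BT, FSZ, ST}.

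I parametrize each model by a $d \times n$ matrix $A = (a_{ij})$ with Pl\"ucker coordinates $x_I(A) = \det(A_I)$, gauge-fixing the left ${\rm GL}_d$-action by setting the first $d \times d$ block of $A$ to the identity, so that $d(n-d)$ affine parameters remain on ${\rm Gr}(d,n)$. For $X(d,n)$, I additionally quotient by the right torus action, either by normalizing each column or, when $d=2$, by passing to $n-3$ cross-ratio coordinates on $\mathcal{M}_{0,n}$. For generic data $u \in \ZZ_{\geq 0}^{\binom{[n]}{d}}$ with $|u| = \sum_I u_I$, the log-likelihoods are
\[
  \ell_{\rm pos}(A) \,=\, \sum_I u_I \log x_I(A) - |u| \log\!\Bigl(\sum_J x_J(A)\Bigr), \qquad
  \ell_{\rm sq}(A) \,=\, \sum_I u_I \log x_I(A)^2 - |u| \log\!\Bigl(\sum_J x_J(A)^2\Bigr),
\]
and the moduli-space likelihood agrees with $\ell_{\rm pos}$ on the torus chart. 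In each case, the ML degree equals the number of nonsingular complex zeros of the gradient system, after clearing denominators.

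For the moduli-space row of (\ref{eq:MLD2}), the entries $1, 2, 6, 24, 120, 720 = (n-3)!$ are the classical ML degree of the very affine variety $\mathcal{M}_{0,n}$ and follow without further computation; the $X(3,n)$ row in (\ref{eq:MLD3}) can be extracted from the methods of \cite{ST}. For the remaining positive-Grassmannian and squared-Grassmannian entries I would solve the gradient system with \texttt{HomotopyContinuation.jl}, starting from a total-degree or polyhedral homotopy and growing the solution set via monodromy loops in the data parameter $u$ until the trace test certifies completeness. For the smallest pairs $(d,n)$ I would additionally cross-check symbolically in \texttt{Macaulay2} by computing the degree of the ML ideal, which gives independent confirmation and serves to calibrate the numerical pipeline.

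The main obstacle is pure scalability. The gradient systems for both the positive and squared Grassmannians grow rapidly in degree and in variable count, and certifying completeness via the trace test becomes infeasible as $(d,n)$ grows. The \texttt{??} entries in the tables record cases where the monodromy procedure never stabilized and the trace test could therefore not be applied, while the $\geq 499976$ entry at $(d,n)=(3,7)$ for the positive Grassmannian reflects a monodromy-based lower bound that I would not be able to certify as sharp with this approach alone.
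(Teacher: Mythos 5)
Your overall strategy is the same as the paper's: take the $\mathcal{M}_{0,n}$ row from the known $(n-3)!$ result in \cite{ST}, take the $X(3,n)$ row from the literature (the paper cites the physics derivation and the rigorous proof in \cite{Ago}), and produce all remaining positive- and squared-Grassmannian entries numerically with \texttt{HomotopyContinuation.jl} via monodromy, with symbolic cross-checks in small cases. However, there is one concrete gap in your treatment of the squared Grassmannian. You set up $\ell_{\rm sq}$ on the gauge-fixed chart $A=[\,{\rm Id}_d \ Y\,]$, but this parametrization of ${\rm sGr}(d,n)$ is not birational: it is $2^{n-1}$-to-one, the fibers being given by sign flips of the rows and columns of $Y$ (Remark \ref{rmk:18}). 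Hence the gradient system you propose to solve has $2^{n-1}$ times as many nondegenerate complex zeros as the ML degree of the model, so a naive count would be wrong (by a factor of $256$ already at $d=2$, $n=9$) and the computation would be far heavier than necessary. The paper handles exactly this point by passing to a birational reparametrization built from a transcendence basis of the field of invariants of the sign-change group (Lemma \ref{lem:field} and Proposition \ref{prop:inversion}) and solving the critical equations in those coordinates. Your proposal needs either that reparametrization or an explicit quotient of your solution set by the $(\mathbb{Z}/2\mathbb{Z})^{n-1}$ action before the numbers you compute can be identified with the squared-Grassmannian row of the tables.

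A smaller point of rigor: the trace test does not ``certify completeness'' in a strict mathematical sense, and the paper is explicit that all numerically computed values are, formally, only lower bounds on the ML degrees (with the $(d,n)=(3,7)$ positive-Grassmannian entry believed to be strictly a lower bound). Your write-up should carry this caveat for every numerically obtained entry, not only for the one displayed with an inequality.
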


\begin{proof}[Proof and discussion]
For the third row in (\ref{eq:MLD2}), see \cite[Proposition 1]{ST}, which states that $\mathcal{M}_{0,n}$
has ML degree $(n-3)!$. For the first row in (\ref{eq:MLD2}), the first two entries appear in \cite[Problem 12]{HKS}.
All other entries for the positive Grassmannians are new.
They were
found by numerical computations
with the software {\tt HomotopyContinuation.jl} \cite{BT}.
The  third row in (\ref{eq:MLD3}) 
 was first derived
in the physics literature, and later proved rigorously in \cite{Ago}.
All numbers for the squared Grassmannians are new and 
also found numerically with {\tt HomotopyContinuation.jl}.
Our methods for this are discussed  in more detail below.

In a strict formal sense, 
the values we computed numerically for the ML degrees
are only lower bounds for these ML degrees.
See \cite[Section 4]{FSZ} for a discussion. In particular,
we believe that the
$n = 7$ entry in
(\ref{eq:MLD3}) for the positive Grassmannian 
is a strict lower bound.
\end{proof}

We record the following conjecture which arises 
from the second row in  (\ref{eq:MLD2}).

\begin{conjecture}
The ML degree of ${\rm sGr}(2,n)$ equals $(n-1)!/2$.
\end{conjecture}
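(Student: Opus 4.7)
The plan is to lift the ML-degree count from the squared Grassmannian to the Grassmannian itself via the squaring map $\sigma\colon {\rm Gr}(2,n)\to {\rm sGr}(2,n)$, $(x_I)\mapsto(x_I^2)$. This map is finite of degree $2^{n-1}$: its generic fiber is the orbit of column sign flips of $A$, parametrized by $\{\pm 1\}^n/\{\pm(1,\ldots,1)\}\cong(\mathbb{Z}/2)^{n-1}$, and $\sigma$ is \'etale away from the coordinate hyperplanes. Since \'etale maps pull critical points back with full multiplicity, each complex critical point of $L(q)=\sum u_{ij}\log q_{ij}-|u|\log\sum q_{kl}$ on ${\rm sGr}(2,n)$ lifts to $2^{n-1}$ critical points of $\tilde L(x)=2\sum u_{ij}\log x_{ij}-|u|\log\sum x_{kl}^2$ on ${\rm Gr}(2,n)$. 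Therefore the conjecture is equivalent to showing that for generic data $u$, $\tilde L$ has exactly $2^{n-2}(n-1)!$ complex critical points on ${\rm Gr}(2,n)$.

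Next, I would parametrize ${\rm Gr}(2,n)$ by $a_{1i}=s_i,\ a_{2i}=s_it_i$, so that $x_{ij}=s_is_j(t_j-t_i)$, and work modulo the residual $GL(2)$-action. The equations $\partial\tilde L/\partial s_i=0$ simplify, via Lemma~\ref{lem:seven} together with the identity $\sum_{j\ne i}\det(P_{\{i,j\}})=P_{ii}$, to the clean \emph{marginal system} $|u|\,P_{ii}=w_i$, where $w_i=\sum_{j\ne i}u_{ij}$; in words, the diagonal of the projection matrix must match the empirical node-marginals. The equations $\partial\tilde L/\partial t_i=0$ yield the \emph{position system} $\sum_{j\ne i}(u_{ij}-|u|q_{ij}/Z)/(t_i-t_j)=0$ (with $Z=\sum x_{kl}^2$), which are residue-type conditions reminiscent of the scattering equations that govern critical points on $\mathcal{M}_{0,n}$.

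The remaining task---and the main obstacle---is to count the solutions of this coupled system. A strong hint is that $(n-1)!/2$ equals the number of Hamiltonian cycles in $K_n$, which suggests that each critical point ought to be indexed by a cyclic ordering of $[n]$ modulo the dihedral group, with each such ordering contributing $2^{n-1}$ sign-lifts on the Grassmannian. One line of attack is to eliminate $s$ from the marginal equations and degenerate the resulting position system on $\mathcal{M}_{0,n}$ to a simpler model whose critical-point count is the familiar $(n-3)!$, then correct by the extra scaling freedom; the hard part is that the Lagrange multiplier $\mu=|u|/Z$ couples the marginal and position equations and prevents a naive decoupling. A complementary route uses Huh's topological formula $\operatorname{MLdeg}(V)=(-1)^{\dim V}\chi(V\setminus\mathcal{H})$: pulling back through $\sigma$, and exploiting that $\chi({\rm Gr}(2,n)^{\circ})=0$ since the thin Grassmannian is a $(\mathbb{C}^*)^{n-1}$-bundle over $\mathcal{M}_{0,n}$, reduces the conjecture to evaluating $\chi$ of the non-toric divisor $\{\sum x_I^2=0\}$ inside ${\rm Gr}(2,n)^{\circ}$---the variety of isotropic $2$-planes in $\mathbb{C}^n$---which is the analogous bottleneck in that approach.
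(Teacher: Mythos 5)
The statement you are addressing is an open conjecture in the paper: the authors support it only by the numerical ML-degree computations in the second row of (\ref{eq:MLD2}) (homotopy continuation for $n\le 9$), so there is no proof in the paper to compare against --- and your proposal does not close the gap either. The parts you do set up are sound and in fact parallel the paper's own toolkit: the $2^{n-1}$-to-one squaring cover is exactly Remark \ref{rmk:18} (which the paper exploits through the reparametrization of Lemma \ref{lem:field} purely to speed up the numerics, not to prove anything), and your marginal equations $|u|\,p_{ii}=w_i$ follow correctly from Lemma \ref{lem:seven} together with the identity $\sum_{j\neq i}\det(P_{\{i,j\}})=p_{ii}$, valid because ${\rm trace}(P)=2$ and $(P^2)_{ii}=p_{ii}$; this matches the marginalization reading of the moment map in Section 6. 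The \'etale-lift bookkeeping, making the conjecture equivalent to counting $2^{n-2}(n-1)!$ critical points upstairs, is also fine once one checks that for generic data no critical point lies on $\{x_I=0\}$ or on $\{\sum_I x_I^2=0\}$, where the cover ramifies or the likelihood degenerates.

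The genuine gap is that the count itself is never carried out, and that count is the entire content of the conjecture. The observation that $(n-1)!/2$ is the number of Hamiltonian cycles of $K_n$ is numerology unless you exhibit a bijection, or a degeneration in the spirit of \cite{Ago}, between solutions of the coupled marginal/position system and cyclic orderings; as you yourself note, the multiplier $|u|/Z$ couples the two blocks of equations, and precisely at that point your argument stops. The topological route has the same status: Huh's formula does apply (the relevant open locus of ${\rm sGr}(2,n)$ is smooth, since the sign group acts freely where all $x_{ij}\neq 0$), and additivity of $\chi$ together with $\chi({\rm Gr}(2,n)^{\circ})=0$ reduces everything to the Euler characteristic of $\{\sum_{i<j}x_{ij}^2=0\}$ inside the open Grassmannian --- but that Euler characteristic is not computed. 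Moreover, your identification of this divisor with the isotropic $2$-planes is incorrect: $\sum_{i<j}x_{ij}^2$ is the Gram determinant of the plane with respect to the standard bilinear form, so its vanishing locus is the hypersurface of planes on which the form restricts degenerately (planes meeting their orthogonal complement nontrivially), whereas totally isotropic $2$-planes form a subvariety of much larger codimension; computing $\chi$ of the wrong stratum would not yield the ML degree. As it stands, your text is a plausible research program consistent with the data for $n\le 9$, not a proof, and the statement remains open both in the paper and after your proposal.
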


What is remarkable about the tables  (\ref{eq:MLD2}) and (\ref{eq:MLD3})
is that the ML degree of ${\rm Gr}(d,n)$ exceeds
the ML degree of ${\rm sGr}(d,n)$, even though
 the former is defined by quadrics and the latter is defined by quartics.
 Also, the degree of ${\rm sGr}(d,n)$
 is $2^{(d-1)(n-d-1)}$ times the degree of
${\rm Gr}(d,n)$. This suggests that the
squared Grassmannian has a special structure in statistics.
We now argue that this is indeed the case:
it represents a determinantal point process (DPP).

Let $\mu$ be a probability measure on the set $2^{[n]}$ of all subsets of $[n]$. 
We write $\mathbf{X}\sim \mu$ for a random subset that is distributed according to $\mu$. 
Then $\mu$ is a {\em determinantal point process} if its correlation functions are given by the
principal minors of a real symmetric $n \times n$ matrix $K$. 
Namely, for a determinantal point process (DPP), we have the formulas
$$
\Pr[J\subseteq \mathbf{X}] \,\,\,= \,\sum_{J\subseteq I\subseteq [n]}\mu(I) \,\, = \,\, {\rm det}(K_J) \qquad {\rm for~all} \,\,\, J \in 2^{[n]}.
$$
The matrix $K$ is called the {\em kernel matrix} of the DPP. The probability measure $\mu$ can be obtained from its
correlation functions by the following alternating sum:
\begin{equation}
\label{eq:moebius}
\mu(I)\,\,\, = \sum_{I\subseteq J\subseteq [n]}(-1)^{\vert J\backslash I\vert} \cdot \Pr[J\subseteq {\bf X}].
\end{equation}
This is the M\"{o}bius inversion of $\Pr[J\subseteq {\bf X}]$ over the Boolean lattice of subsets of $[n]$, see \cite{kassel2019}. 
If the matrix ${\rm Id}_n - K$ is invertible then we set $\Theta = K({\rm Id}_n - K)^{-1}$ to rewrite (\ref{eq:moebius}) as follows:
$$ \mu(I) \,\, = \,\, \frac{{\rm det}(\Theta_I)}{{\rm det}({\rm Id}_n + \Theta)} \quad \hbox{for all} \,\, I \in 2^{[n]}.  $$
This is the parametrization of the DPP  used in the recent algebraic statistics study \cite{FSZ}.

Here we stick with the kernel matrix $K$. Note that a symmetric matrix $K$ is the
kernel matrix of a DPP  if and only if its eigenvalues lie
in the interval $[0,1]$; see \cite[Theorem 22]{hough2005}. 

In the present work we are  interested in the boundary case
 when all eigenvalues of $K$ lie in the two-element set $\{0,1\}$. This means that $K$ is an orthogonal projection matrix $P$, satisfying $P^2 = P$.
 These  models are known as {\em projection determinantal point processes}.
 Now, the measure $\mu$ is supported on $\binom{[n]}{d}$,
where $d = {\rm rank}(P) = {\rm trace}(P)$.
   By \cite[Lemma~17]{hough2005}, the measure of a projection DPP is given by the principal minors of size $d \times d$ in $P$:
$$ \mu(I) \,\,= \,\,\det(P_I) \,\,=\,\, \frac{x_I^2}{\sum_J x_J^2} \quad
\hbox{for all} \,\, I \in \binom{[n]}{d}. $$
This is precisely the formula 
in Lemma \ref{lem:seven}.
We thus summarize our discussion as follows:

\begin{corollary}
The projection DPP is the discrete statistical model on the state space $ \binom{[n]}{d}$ whose
underlying algebraic variety is the squared Grassmannian ${\rm sGr}(d,n)$.
\end{corollary}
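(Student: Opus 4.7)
The plan is to assemble the corollary directly from Lemma \ref{lem:seven} and the discussion that immediately precedes it. Since both ingredients are already in hand, the proof is essentially a bookkeeping check, but I want to make three identifications explicit and verify a Zariski-density statement at the end.

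First I would recall that a projection DPP is specified by an orthogonal projector $P \in \RR^{n\times n}$ of rank $d$, and by \cite[Lemma~17]{hough2005} its probability measure on $2^{[n]}$ is supported on $\binom{[n]}{d}$ with $\mu(I) = \det(P_I)$. The set of such $P$ is precisely the projection-matrix embedding of ${\rm Gr}(d,n)$ studied in Section~2, so the parameter space of projection DPPs is the real Grassmannian.

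Next I would apply Lemma \ref{lem:seven} to rewrite this parametrization in Pl\"ucker coordinates as
\[
 (x_I)_{I \in \binom{[n]}{d}} \,\,\longmapsto\,\, \bigl(\det(P_I)\bigr)_I \,\,=\,\, \left(\frac{x_I^2}{\sum_J x_J^2}\right)_{I \in \binom{[n]}{d}}.
\]
This factors as the squaring morphism $(x_I) \mapsto (x_I^2)$, whose image is, by definition, ${\rm sGr}(d,n) \subset \PP^{\binom{n}{d}-1}$, followed by the standard identification of the non-negative projective orthant with the probability simplex $\Delta_{\binom{n}{d}-1}$. Hence the set of projection-DPP distributions is exactly the non-negative real slice of ${\rm sGr}(d,n)$.

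Finally, to identify the underlying algebraic variety of the model with ${\rm sGr}(d,n)$ itself, I would verify that this semialgebraic slice is Zariski-dense in ${\rm sGr}(d,n)$. The real Grassmannian ${\rm Gr}(d,n)(\RR)$ is a smooth real manifold of real dimension $d(n-d)$, which equals the complex dimension of ${\rm Gr}(d,n)$; it is therefore Zariski-dense in ${\rm Gr}(d,n)(\CC)$. The squaring morphism is dominant onto ${\rm sGr}(d,n)$ by construction, so it sends this dense set to a Zariski-dense subset of ${\rm sGr}(d,n)$. There is no genuine obstacle here: once Lemma \ref{lem:seven} and the DPP formula are available, the corollary is immediate, modulo the standard density fact in the last step.
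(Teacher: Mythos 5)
Your argument is correct and follows essentially the same route as the paper: the paper's ``proof'' is precisely the preceding discussion, which combines the formula $\mu(I)=\det(P_I)$ for a projection DPP from \cite[Lemma~17]{hough2005} with Lemma~\ref{lem:seven} to identify the model distributions with the points $(x_I^2/\sum_J x_J^2)$ of ${\rm sGr}(d,n)$. Your final Zariski-density verification (real points of ${\rm Gr}(d,n)$ are dense, and the squaring map is dominant) is a small extra step the paper leaves implicit, and it correctly justifies calling ${\rm sGr}(d,n)$ the underlying variety of the model.
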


The implicit representation of the projection DPP as a model in the probability
simplex $\Delta_{\binom{n}{d}-1}$ was our topic in Section 3.
Here we turn to likelihood geometry.
For any of our three statistical models on the Grassmannian, any data set
can be summarized in a vector $u = (u_I)_{I \in \binom{[n]}{d}}$
whose coordinates $u_I$ are nonnegative integers. The {\em log-likelihood function} is
$$ L_u \,\, = \,\, \sum_{I \in \binom{[n]}{d}} u_I \cdot {\rm log}( \mu(I)). $$
Maximum likelihood estimation (MLE) aims to maximize $L_u$
 over all model parameters.
In algebraic statistics, we do this by computing all complex critical points
of $L_u$, which amounts to solving a system of rational function equations in the
model parameters. The number of complex solutions is the {\em ML degree} of the model.
See \cite{Ago, FSZ, HKS, ST} and references therein.

Our point of departure is
 the parametrization $\CC^{d(n-d)} \rightarrow \PP^{\binom{n}{d}-1}$
of  the Grassmannian ${\rm Gr}(d,n)$, which is given 
by the maximal minors of the
$d \times n$ matrix $[ \, {\rm Id}_d \,\, Y \, ]$, where
$Y= (y_{i,j})$ is a $d \times (n-d)$ matrix of unknowns.
The ML degrees for ${\rm Gr}(d,n)$ in (\ref{eq:MLD2}) and (\ref{eq:MLD3}) 
were computed directly from this parametrization with {\tt HomotopyContinuation.jl},
as  in \cite{Ago, FSZ}.

\begin{remark} \label{rmk:18} The natural parametrization of ${\rm Gr}(d,n)$ is birational.
However, its extension to
the squared Grassmannian is $2^{n-1}$ to $1$. The fibers  of this parametrization of
${\rm sGr}(d,n)$
are obtained by flipping the signs of rows and columns in
the parameter matrix $Y = (y_{i,j}) $.
\end{remark}

Remark \ref{rmk:18} means that the natural parametrization is inefficient
when it comes to computing critical points. 
Computation times can be reduced by a factor of  up to $2^{n-1}$ if we use a birational
reparametrization of our model. We now present such a reparametrization.
The construction is similar to \cite[Section 4]{FSZ}. 
We shall identify a transcendence basis for the field of invariants
of the $(\mathbb{Z}/2 \mathbb{Z})^n$ action on
$Y $ that is given by sign changes in rows and columns.

\begin{lemma} \label{lem:field}
A transcendence basis for the field of invariants is given by the
squares of the $n-1$ matrix entries in the first row and column of $Y$,
and the quartics obtained by multiplying the entries in 
the $(n-d-1)(d-1)$ adjacent $2 \times 2$-submatrices. We denote this basis~by
\begin{equation}
  \label{eq:birationalpara} \begin{matrix} \alpha \,:=\, y_{1,1}^2\,,\,\,
    \beta_j \,:= \,y_{1,j}^2\,
\,\,\,\hbox{for}\, \,\,2 \leq j \leq n{-}d ,\,\,
\,\,  \gamma_i \,:=\, y_{i,1}^2
 \,\,\, \hbox{for}\,\, \,2 \leq i \leq d,
\smallskip \\
\kappa_{ij} \,:=\, y_{i,j} \,y_{i,j+1} \,y_{i+1,j} \,y_{i+1,j+1}   \quad
\hbox{for} \,\,
1 \leq i \leq d{-}1,\,
1 \leq j \leq n{-}d{-}1.
\end{matrix} 
 \end{equation}
\end{lemma}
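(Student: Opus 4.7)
The plan is to establish three facts: (i) each of the listed elements lies in $k(Y)^G$ for $G = (\mathbb{Z}/2\mathbb{Z})^n$ acting by sign changes on rows and columns of $Y$; (ii) the list has exactly $d(n-d)$ elements; (iii) they are algebraically independent over $k$. Since $G$ is a finite group, $k(Y)/k(Y)^G$ is an algebraic extension, so $\mathrm{trdeg}_k(k(Y)^G) = \mathrm{trdeg}_k(k(Y)) = d(n-d)$. Items (i)--(iii) then force the $d(n-d)$ elements to form a transcendence basis.

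Invariance is a parity check. In each monomial $y_{1,1}^2$, $y_{1,j}^2$, $y_{i,1}^2$, and $\kappa_{ij} = y_{i,j} y_{i,j+1} y_{i+1,j} y_{i+1,j+1}$, every row index and every column index of $Y$ appears with even multiplicity; the quartic uses rows $i, i+1$ twice each and columns $j, j+1$ twice each. Hence each row-flip and each column-flip fixes them. For the count,
$$
1 + (n-d-1) + (d-1) + (d-1)(n-d-1) \,=\, (n-d)\bigl(1 + (d-1)\bigr) \,=\, d(n-d).
$$

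For algebraic independence I plan to compute the Jacobian and show it is nonsingular. I would order the $d(n-d)$ variables in three blocks: the first row $\{y_{1,j} : 1 \le j \le n-d\}$, the rest of the first column $\{y_{i,1} : 2 \le i \le d\}$, and the interior $\{y_{i,j} : 2 \le i \le d,\ 2 \le j \le n-d\}$, and list the invariants in the parallel blocks $(\alpha, \beta_2, \ldots, \beta_{n-d})$, $(\gamma_2, \ldots, \gamma_d)$, $(\kappa_{ij})$. Since $\alpha$ and the $\beta_j$ depend only on first-row variables and the $\gamma_i$ depend only on first-column variables, the Jacobian is block lower-triangular. The two upper-left blocks are diagonal with entries $2y_{1,j}$ and $2y_{i,1}$. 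For the interior block $C$, I would pair $\kappa_{ij} \leftrightarrow y_{i+1,j+1}$; the diagonal entry $\partial \kappa_{ij}/\partial y_{i+1,j+1} = y_{i,j}\,y_{i,j+1}\,y_{i+1,j}$ is generically nonzero, while $\partial \kappa_{ij}/\partial y_{k,l}$ vanishes unless $(k,l) \in \{i,i+1\}\times\{j,j+1\}$. Under the bijection this forces the associated index $(k-1,l-1)$ to satisfy $k-1 \le i$ and $l-1 \le j$, so in lex order on $(i,j)$ the block $C$ is lower triangular with nonzero diagonal. The full Jacobian determinant is therefore a nonzero monomial in the $y_{i,j}$, proving algebraic independence.

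The conceptually main step is the triangularity of the interior block $C$; everything else is bookkeeping. The one point that needs care is verifying that the $\kappa \leftrightarrow y$ pairing is a bijection between the $(d-1)(n-d-1)$ quartics and the interior variables, and that the diagonal monomial is indeed nonzero away from a proper closed subvariety of $Y$, which it is since it factors into three distinct variable entries of $Y$.
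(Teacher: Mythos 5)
Your proof is correct for the lemma as literally stated, but it takes a genuinely different route from the paper. You verify invariance by the even-multiplicity parity check, count $1+(n-d-1)+(d-1)+(d-1)(n-d-1)=d(n-d)$ elements, and prove algebraic independence by exhibiting a generically nonsingular Jacobian (block triangular, with the interior block made lower triangular by the pairing $\kappa_{ij}\leftrightarrow y_{i+1,j+1}$ and lex order); combined with $\mathrm{trdeg}\,k(Y)^G=\mathrm{trdeg}\,k(Y)=d(n-d)$ for the finite group $G$, this does force the listed elements to be a transcendence basis, and the triangularity argument checks out. The paper instead identifies invariant Laurent monomials with integer $d\times(n-d)$ exponent matrices having even row and column sums, views these as closed walks in $K_{d,n-d}$, and shows by induction that every such matrix decomposes over the exponent vectors of $\alpha,\beta_j,\gamma_i,\kappa_{ij}$. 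The trade-off: your argument is shorter and essentially automatic, but it only shows that $k(Y)^G$ is \emph{algebraic} over the subfield generated by the listed invariants, whereas the paper's lattice decomposition shows these invariants \emph{generate} the invariant field (every invariant monomial is a Laurent monomial in them). That stronger conclusion is what the paper actually uses next: it underlies the explicit inversion formulas of Proposition \ref{prop:inversion} and the claim that the reparametrization of the squared Grassmannian is birational, neither of which follows from a transcendence-basis statement alone (e.g.\ $x^4$ is a transcendence basis of $\mathbb{C}(x)$ but does not generate it). So your proof suffices for the lemma as stated, but if you want to support the subsequent birationality claim you would need to supplement it with an argument, like the paper's walk decomposition, that the basic invariants generate the full invariant field.
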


\begin{proof}
We consider the lattice consisting of all integer
matrices of format $ d \times (n-d)$ with even row sums and
even column sums. The matrices in this lattice correspond to
Laurent monomials in our invariant field. Every matrix can be
interpreted as a walk in the complete bipartite graph $K_{d,n-d}$
whose edges are labeled by the unknowns $y_{i,j}$.
One proves by induction on $d$ and $n$ that every such walk
can be decomposed into the basis described above.
\end{proof}

Lemma \ref{lem:field} implies the following
formulas for inverting our birational parametrization.

\begin{proposition} \label{prop:inversion}
The unknowns $y_{i,j}$ are expressed as follows in terms of the invariants.
$$ y_{1,1} \,=\, \alpha^{1/2},\,\,
y_{1,j} \,=\, \beta_j^{1/2},\,\,
y_{i,1} \,=\, \gamma_i^{1/2}, \,\, {\rm and}
\vspace{-0.1in} $$
$$
y_{i,j} \,\,=\,\,
\alpha^{(-1)^{(i+j+1)}/2} \,\beta_j^{(-1)^{(i+1)}/2}\,\gamma_i^{(-1)^{(j+1)}/2}\,
\prod_{k=1}^{i-1} \prod_{l=1}^{j-1} \kappa_{kl}^{\,(-1)^{i+j+k+l}}.
$$
Here, $ 2 \leq i \leq d$ and $2 \leq j \leq n-d$.
These substitutions transform any
monomial in the unknowns $y_{i,j}$ that is invariant into a monomial in the
basic invariants (\ref{eq:birationalpara}).
In particular, for every square submatrix $Z$ of the matrix $\,Y$,
this writes ${\rm det}(Z)^2$ as a polynomial
in $\alpha,\beta_j,\gamma_i,\kappa_{ij}$.
\end{proposition}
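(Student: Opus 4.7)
The plan is to verify the explicit formulas by expanding them as Laurent monomials in the $y_{a,b}$'s, and then to deduce the transformation-of-invariants claim from a parity analysis on the exponent lattice.

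For the boundary entries $y_{1,1}$, $y_{1,j}$, $y_{i,1}$, the claimed formulas invert the definitions of $\alpha,\beta_j,\gamma_i$ up to a choice of square-root branch; since the parametrization is birational, one consistent branch suffices. For the interior entries $y_{i,j}$ with $i,j\geq 2$, I would substitute the proposed expression into the right-hand side and verify that, as a Laurent monomial in the $y_{a,b}$'s, it equals $y_{i,j}$. Each factor $\kappa_{kl}$ contributes $y_{k,l}\,y_{k,l+1}\,y_{k+1,l}\,y_{k+1,l+1}$, so the exponent of any fixed $y_{a,b}$ in $\prod_{k=1}^{i-1}\prod_{l=1}^{j-1}\kappa_{kl}^{(-1)^{i+j+k+l}}$ is a signed count over the pairs $(k,l)\in(\{a-1,a\}\cap[1,i-1])\times(\{b-1,b\}\cap[1,j-1])$. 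Because the signs alternate, this sum telescopes: it leaves $+1$ at $(a,b)=(i,j)$ and $0$ at every interior entry with $1<a<i$ and $1<b<j$. The residual boundary contributions (where $a\in\{1,i\}$ or $b\in\{1,j\}$) are cancelled exactly by the half-integer exponents on $\alpha=y_{1,1}^2$, $\beta_j=y_{1,j}^2$, $\gamma_i=y_{i,1}^2$. An equivalent inductive route uses the rearrangement $y_{i,j}=\kappa_{i-1,j-1}\big/(y_{i-1,j-1}\,y_{i-1,j}\,y_{i,j-1})$ together with the inductive hypothesis on the three smaller entries.

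For the transformation claim, consider the lattice $\Lambda\subset\ZZ^{d(n-d)}$ of exponent matrices $(e_{a,b})$ with every row sum and every column sum even. Because the $(\ZZ/2\ZZ)^n$-action flips the signs of all entries in a single row or column of $Y$, $\Lambda$ is exactly the exponent lattice of invariant Laurent monomials $\prod y_{a,b}^{e_{a,b}}$. After substituting the inversion formulas, the only half-integer exponents appear on $\alpha,\beta_j,\gamma_i$; the $\gamma_a$-exponent of $\prod y_{a,b}^{e_{a,b}}$ becomes $\tfrac{1}{2}\sum_b (-1)^{b+1}e_{a,b}$, which differs from $\tfrac{1}{2}\sum_b e_{a,b}$ by an integer and is therefore itself an integer because the $a$-th row sum is even. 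The analogous statement column-wise handles $\beta_j$, and combining both parities handles $\alpha$. Hence every invariant monomial becomes a Laurent monomial in the basic invariants; applied term-by-term to $\det(Z)^2$ (invariant because $\det(Z)$ picks up a sign under each row or column flip), this yields the asserted expression in $\alpha,\beta_j,\gamma_i,\kappa_{ij}$.

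The main obstacle I expect is sign bookkeeping: correctly tracking the alternating signs $(-1)^{i+j+k+l}$ through the telescoping argument, and confirming the parity cancellations on $\alpha,\beta_j,\gamma_i$ across all four parity classes of $(i,j)$. No single step is deep, but with many cases (corners versus edges versus interior of the $(i-1)\times(j-1)$ rectangle of $\kappa$'s), it is easy to commit a sign error; I would therefore fix signed indicator functions at the outset, re-use them uniformly across the rectangle, and dispatch the four parities of $(i,j)$ in a single table rather than case by case.
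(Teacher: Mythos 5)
Your proposal is correct, and it is in fact more complete than what the paper records: the paper states Proposition \ref{prop:inversion} as a consequence of Lemma \ref{lem:field}, whose proof identifies invariant Laurent monomials with the lattice of integer $d\times(n-d)$ matrices having even row and column sums and decomposes these (viewed as walks in $K_{d,n-d}$) into the basic invariants by an induction that is only sketched; the explicit inversion formulas themselves are left to direct computation, illustrated in the $n=6,d=3$ example. You instead verify the formulas head-on: the telescoping of the signs $(-1)^{i+j+k+l}$ over the $(i-1)\times(j-1)$ rectangle of $\kappa$'s does leave exponent $+1$ exactly at $(i,j)$, kills all interior entries, and the surviving boundary contributions (exponent $(-1)^{i+j}$ on $y_{1,1}$, $(-1)^i$ on $y_{1,j}$, $(-1)^j$ on $y_{i,1}$) are precisely cancelled by the half-integer powers of $\alpha,\beta_j,\gamma_i$ — I checked these signs and they work out. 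Your parity argument for the second claim uses the same even-row-sum/even-column-sum lattice as the paper's Lemma, but applied directly to the substituted exponents: the $\gamma_a$- and $\beta_b$-exponents are half of an alternating row (resp.\ column) sum, hence integral, and the $\alpha$-exponent $\tfrac12\bigl(e_{1,1}+\sum_{a,b\ge 2}(-1)^{a+b+1}e_{a,b}\bigr)$ is integral because modulo $2$ the total sum, the first row sum and the first column sum force $\sum_{a,b\ge2}e_{a,b}\equiv e_{1,1}$. Your observation that each term $\operatorname{sgn}(\sigma)\operatorname{sgn}(\tau)\prod y\prod y$ of $\det(Z)^2$ is individually invariant (each relevant row and column index occurs exactly twice) is exactly what justifies the term-by-term application. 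Two cosmetic caveats: the boundary formulas $y_{1,1}=\alpha^{1/2}$ etc.\ hold only up to the choice of signs within a $(\ZZ/2\ZZ)^n$-orbit, as you note, and the resulting expression for $\det(Z)^2$ is in general a Laurent polynomial in the basic invariants (see the denominator in the paper's example), so the word ``polynomial'' in the statement should be read in that sense; neither affects the validity of your argument.
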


\begin{example}[$n=6,d=3$]
The  parametrization (\ref{eq:birationalpara}) is given by
$ \alpha = y_{11}^2, \beta_2 = y_{12}^2, \beta_3 = y_{13}^2,
\gamma_2 = y_{2,1}^2, \gamma_3 = y_{3,1}^2,
\kappa_{11} = y_{1,1} y_{1,2} y_{2,1} y_{2,2},\, \smallskip 
\kappa_{12} = y_{1,2} y_{1,3} y_{2,2} y_{2,3},\,
\kappa_{21} = y_{2,1} y_{2,2} y_{3,1} y_{3,2},\,
\kappa_{22} = y_{2,2} y_{2,3} y_{3,2} y_{3,3}. 
$
Solving these nine equations for the $y_{i,j}$ yields
 the monomials with half-integer exponents
in Proposition \ref{prop:inversion}. The first five equations are easy to solve.
For the others,

\begin{footnotesize}
$$ 
y_{2,2} = \frac{ \kappa_{11} }{ \sqrt{ \alpha \beta_2 \gamma_2}},\,\,
y_{2,3} = \frac{ \sqrt{ \alpha \gamma_2} \,\kappa_{12}}{ \sqrt{\beta_3}\, \kappa_{11}},\,\,
y_{3,2} = \frac{ \sqrt{\alpha \beta_2} \,\kappa_{21} } {\sqrt{\gamma_3}\, \kappa_{11}} ,\,\,
y_{3,3} = \frac{ \sqrt{\beta_3 \gamma_3} \,\kappa_{11} \kappa_{22} } {\sqrt{\alpha} \,\kappa_{12} \kappa_{21}}.
$$ \end{footnotesize}
By substituting these expressions into the $3 {\times} 3 $
matrix $Y = (y_{i,j})$, we find that
$ {\rm det}(Y)^2 $~equals
\begin{footnotesize} $$ \!\! \frac{
(\alpha^2 \beta_2 \gamma_2 \kappa_{12}^2 \kappa_{21}^2-\alpha \beta_2 \beta_3 \gamma_2 \kappa_{11} \kappa_{12} \kappa_{21}^2
-\alpha \beta_2 \gamma_2 \gamma_3 \kappa_{11} \kappa_{12}^2 \kappa_{21}
+\beta_2 \beta_3 \gamma_2 \gamma_3 \kappa_{11}^3 \kappa_{22}-\beta_3 \gamma_3 \kappa_{11}^4 \kappa_{22}
+\beta_3 \gamma_3 \kappa_{11}^3 \kappa_{12} \kappa_{21})^2}
{ \alpha \,\beta_2  \, \beta_3\, \gamma_2 \,\gamma_3 \, \kappa_{11}^4 \,\kappa_{12}^2 \, \kappa_{21}^2} 
$$ \end{footnotesize}
Likewise, the squares of all $2 \times 2$ minors  of $\,Y$ are Laurent polynomials
in the new parameters $\alpha,\beta_2,\beta_3,\gamma_2,\gamma_3,\kappa_{11},\kappa_{12}, \kappa_{21}, \kappa_{22}$.
These squared minors are probabilities in our DPP model.
\end{example}

\section{The projection Grassmannian}

We now examine the affine variety
in $\RR^{\binom{n+1}{2}}$ that is
defined by the entries  of $P^2 - P$.
Let $I_n$ denote the ideal generated by
these $\binom{n+1}{2}$ quadratic polynomials
in the $\binom{n+1}{2}$ entries $p_{ij}$
of the symmetric matrix $P$.
By fixing the trace of $P$, we
obtain the projection Grassmannian.

\begin{definition}[Projection Grassmannian]
  The {\em projection Grassmannian} is defined as
\begin{equation}
\label{eq:affinegrass} {\rm pGr}(d,n) \,\, = \,\, V\bigl( \,I_n + \langle {\rm trace}(P) - d \rangle \,\bigr). 
\end{equation}
\end{definition}

This is an irreducible affine variety of dimension $d(n-d)$.
We now prove that the given equations generate prime ideals in the polynomial ring
$\CC[P]$ whose unknowns are the $\binom{n+1}{2}$ entries of $P = (p_{ij})$.
In particular,  eqn.~(\ref{eq:affinegrass}) gives
 the ideal of the projection Grassmannian.

\begin{theorem} \label{thm:radical}
The ideal $I_n$ is radical, and
$I_{n,d} \coloneqq I_n + \langle \,{\rm trace}(P) - d \,\rangle $ is prime for all $d$.
\end{theorem}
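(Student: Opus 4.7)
The plan is to handle the two assertions in order: first show each $I_{n,d}$ is prime, and then deduce that $I_n$ is radical via a Chinese Remainder decomposition indexed by the trace.

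For primality of $I_{n,d}$, I would combine an irreducibility argument for the variety with a smoothness-via-group-action argument for reducedness of the scheme. Theorem \ref{thm:PPlucker} realizes $V(I_{n,d})$ as the image of the dense open set $U \subset \mathrm{Gr}(d,n)$ where $\sum_I x_I^2 \neq 0$, under the map $X \mapsto d\cdot XX^T/\mathrm{trace}(X^TX)$. This map is surjective on $\mathbb{C}$-points: any complex symmetric idempotent $P$ of trace $d$ satisfies $\mathrm{range}(P)\cap \ker(P) = 0$ (from $P^2 = P$) and $\ker(P) = \mathrm{range}(P)^\perp$ (from $P^T = P$), so $\mathrm{range}(P)$ is a non-degenerate $d$-plane lying in the image. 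Hence $V(I_{n,d})$ is irreducible of dimension $d(n-d)$. For reducedness of the scheme, I would linearize the defining equations at $P_0 = \mathrm{diag}(I_d, 0)$ and check that the tangent space has dimension exactly $d(n-d)$, so $V(I_{n,d})$ is smooth at $P_0$. The complex orthogonal group $O(n,\mathbb{C})$ acts scheme-theoretically on $V(I_{n,d})$ via $P \mapsto UPU^T$ and acts transitively on its $\mathbb{C}$-points (two non-degenerate $d$-planes in $\mathbb{C}^n$ are $O(n,\mathbb{C})$-equivalent via Gram-Schmidt on the standard complex bilinear form). Smoothness at $P_0$ therefore propagates to every closed point, making the scheme globally smooth and hence reduced; combined with irreducibility, $\mathbb{C}[P]/I_{n,d}$ is an integral domain.

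To prove that $I_n$ is radical, I would establish the polynomial identity $\prod_{d=0}^n(t - d) \in I_n$ for $t = \mathrm{trace}(P)$, and then invoke CRT. Modulo $I_n$ one has $P^k \equiv P$ for all $k \geq 1$, so Newton's identities give $e_k(P) \equiv \binom{t}{k} \pmod{I_n}$ for every elementary symmetric coefficient. Cayley--Hamilton $\chi_P(P) = 0$ then reduces to $\alpha P + \beta I \equiv 0$ in $M_n(\mathbb{C}[P]/I_n)$, where $\alpha = \sum_{k=0}^{n-1}(-1)^k e_k$ and $\beta = (-1)^n e_n$. Multiplying by $P$ yields $(\alpha + \beta) P \equiv 0$, and taking trace gives $(\alpha + \beta) t \in I_n$. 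A short binomial calculation using Pascal's rule produces $\alpha + \beta \equiv (-1)^n \binom{t-1}{n} \pmod{I_n}$, so $t\binom{t-1}{n} \in I_n$, i.e., $\prod_{d=0}^n(t-d) \in I_n$ up to a nonzero constant. Since the ideals $\langle t - d\rangle$ are pairwise comaximal for distinct $d$, CRT then gives $\mathbb{C}[P]/I_n \cong \prod_{d=0}^n \mathbb{C}[P]/I_{n,d}$, a product of integral domains and hence reduced.

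The main obstacle is the smoothness-propagation step: one must verify rigorously that $O(n,\mathbb{C})$ acts transitively on the $\mathbb{C}$-points of $V(I_{n,d})$, and that this pointwise transitivity upgrades smoothness at a single closed point to global scheme-theoretic smoothness (hence reducedness of the ideal, not merely of its underlying variety). The classical input is that every non-degenerate $d$-plane in $\mathbb{C}^n$ is $O(n,\mathbb{C})$-conjugate to $\mathrm{span}(e_1, \ldots, e_d)$ via Gram-Schmidt adapted to the complex bilinear form; once this is in place, the rest of the argument is mechanical.
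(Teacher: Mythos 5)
Your proposal is correct, and it splits naturally into a half that matches the paper and a half that is genuinely different. For the primality of $I_{n,d}$ you follow essentially the paper's strategy: irreducibility of $V(I_{n,d})$ of dimension $d(n-d)$, a tangent-space computation at the special point $M_d = {\rm diag}({\rm Id}_d,0)$, and propagation of smoothness through the conjugation action of $O(n,\CC)$, which is transitive on closed points; smooth hence reduced, reduced plus irreducible hence prime. Your justification of transitivity (the range of a complex symmetric idempotent is a nondegenerate $d$-plane, and Witt/Gram--Schmidt moves any such plane to ${\rm span}(e_1,\dots,e_d)$) replaces the paper's argument via diagonalizability and Gantmacher's theorem, and your surjectivity/irreducibility argument via the map of Theorem \ref{thm:PPlucker} on the locus $\sum_I x_I^2 \neq 0$ makes explicit what the paper only asserts; the tangent-space count you leave as a check is exactly the paper's Jacobian computation ($df_{ij}|_{M_d}$ equals $-dp_{ij}$, $0$, or $dp_{ij}$ according to the block), and it does give dimension exactly $d(n-d)$. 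Where you genuinely diverge is the radicality of $I_n$: the paper proves that the scheme $X_n = V(I_n)$ is itself smooth (same Jacobian computation at each $M_d$, together with the orbit decomposition of $X_n(\CC)$), whereas you prove the trace identity $\prod_{d=0}^n({\rm trace}(P)-d) \in I_n$ via Newton's identities ($e_k \equiv \binom{t}{k}$ modulo $I_n$) and Cayley--Hamilton, and then use pairwise comaximality and CRT to identify $\CC[P]/I_n$ with $\prod_{d}\CC[P]/I_{n,d}$, a product of domains. I checked the binomial manipulation ($\sum_{k=0}^n(-1)^k\binom{t}{k} = (-1)^n\binom{t-1}{n}$, hence $t\binom{t-1}{n} \in I_n$) and it is correct; alternatively the identity holds over any commutative ring by localizing, since an idempotent matrix over a local ring is conjugate to ${\rm diag}({\rm Id}_d,0)$. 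Your route buys the decomposition $I_n = \bigcap_{d=0}^n I_{n,d}$ (the paper's subsequent corollary) directly and without the Nullstellensatz, and it avoids discussing smoothness of $X_n$ altogether; what it gives up is the stronger geometric conclusion that $X_n$ is a smooth scheme, which the paper gets at no extra cost. Two cosmetic caveats: Newton's identities involve division by $k!$, so your argument is specific to characteristic zero (fine over $\CC$), and Theorem \ref{thm:PPlucker} is stated over $\RR$, so you should note that the formula is an algebraic identity valid on the complex Grassmannian away from the quadric $\sum_I x_I^2 = 0$.
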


\begin{proof} The ideal $I_n$ is generated by the following quadratic polynomials in
the ring $\CC[P]$:
\begin{equation}
   f_{ij} \,\,\coloneqq \,\,p_{ij} 
   - p_{i1} p_{1j} - p_{i2} p_{2j} - \cdots - p_{1n} p_{nj}
   \quad \text{ for  }\, 1 \leq i \leq j \leq n.
\end{equation}
 Let $X_n $ denote the subscheme of 
    the affine space $ {\rm Spec}(\CC[P]) = \mathbb{A}_\CC^{\binom{n+1}{2}} $ defined by $I_n$.
    Moreover, let $X_{n,d}$ denote the subscheme  of $X_n$  defined by the ideal
    $   I_{n,d} $. Both $X_n$ and $X_{n,d}$ are affine schemes,
    so ``radical'' is equivalent to ``reduced.''
It is our plan to prove that $X_n$ and $X_{n,d}$ are smooth. 
Since smooth schemes are reduced, the first statement follows. The fact that $X_{n, d}$ is irreducible will then imply that
$I_{n,d}$ is a prime ideal in the polynomial ring  $\CC[P]$.

    To prove that $X_n$ is smooth, we will show that $X_n$ is smooth at every closed point in $X_n(\CC)$. We shall use the action of the orthogonal group
    $O(n) = O(n,\CC)$ on symmetric $n \times n$ matrices by conjugation.
    This action preserves the ideal $I_n$ and it therefore induces an action 
    on $X_n(\CC)$.   We claim that  this action has  $n+1$ orbits, namely
      the rank loci  $X_{n,d}(\CC)$ for $d=0,1,\ldots,n$.
    Note that $X_{n,d}(\CC) $ consists of complex symmetric $n \times n$ matrices of rank $d$ that are idempotent.    The $O(n)$ action on $X_n(\CC)$ preserves matrix rank, so it fixes $X_{n,d}(\CC)$.

    We show that the $O(n)$ action is transitive on $X_{n,d}(\CC)$.
    Every matrix in $X_{n,d}(\CC)$ is diagonalizable: its minimal polynomial must divide $z^2 - z$ and is therefore square-free.
      Thus all matrices in $X_{n,d}(\CC)$ are diagonalizable to the
      same diagonal matrix $M_d$. Here, $M_d$ is the matrix with entry one in
      positions $(1,1),(2,2),\ldots,(d,d)$ and all other entries zero. Now
we can use a result from linear algebra, found in
      \cite[Theorem XI.4]{Gantmacher}, which states that
      every symmetric complex diagonalizable matrix can be diagonalized by a orthogonal matrix, so
      every matrix in $X_{n,d}(\CC)$ is conjugate to $M_d$ under the action of the
      orthogonal group $O(n)$.
      In other words, $X_{n,d}(\CC)$ is the orbit of $M_d$ under the conjugation action of $O(n)$.
 
      We will now show that  the affine scheme $X_{n,d}$  is smooth at the closed point $M_d$.
      We know that $X_{n,d}$ is irreducible of dimension $d(n-d)$. Hence, using the Jacobian criterion (see, for example, ~\cite[Lemma 6.27]{GoertzWedhorn}), it suffices to show
      that the total derivatives $d f_{ij}$, evaluated at the point $M_d$, span a 
    space of dimension at least $\binom{n+1}{2} - d(n-d)$.

We compute        $\,d(f_{ij}) \,=\, d(p_{ij} - \sum^n_{k=1} p_{ik} p_{kj}) \,=\, dp_{ij} - \sum^n_{k=1} (p_{ik} d p_{kj} + p_{kj} d p_{ik})$. At the point $M_d$,   we have $p_{ij} = 0$ unless $i = j \leq d$,
in which case we have    $p_{ij} = 1$. This implies
$$ 
d(f_{ij})|_{M_d} \,\,=\,\, \begin{cases}
 - dp_{ij} & {\rm for} \,\,    i \leq j \leq d, \\
     \quad \, 0   & {\rm for} \,\,    i \leq d < j ,\\
  \phantom{-}    dp_{ij} & {\rm for} \,\,    d < i \leq j . 
\end{cases}
$$
We conclude that    the $d(f_{ij})|_{M_d}$ span a space of dimension $\binom{d+1}{2} + \binom{n-d+1}{2} =
 \binom{n+1}{2} - d(n-d)$. Put differently, the evaluation of the
 $\binom{n+1}{2} \times \binom{n+1}{2}$ Jacobian matrix of $(f_{11},f_{12}, \ldots,f_{nn})$ 
at the point $M_d$ has entries in $ \{-1,0,1\}$, and the rank of that matrix
   is at least  $\binom{n+1}{2} - d(n-d)$.
   This shows that $X_{n,d}$ is smooth at $M_d$, and hence $X_{n,d}$ is smooth
   at every closed point.

Now, to get the same statement for $X_n$, we note that
only the generators of $I_n$ were used in the calculation above.
This verifies that $X_n$ is smooth at $M_d$. Since
      $X_n(\CC)$ is the disjoint union of the orbits $X_{n,d}(\CC)$, for $d=0,1,\ldots,n$,
      we conclude that $X_n$ is a smooth scheme.
      
      We finally use  that smooth schemes are reduced;
      see, e.g.,~\cite[Theorem 6.28, Proposition B.77 (2)]{GoertzWedhorn}. Furthermore, an affine scheme is reduced if and only if its
ideal is radical \cite[Proposition 3.27]{GoertzWedhorn}. This shows that $I_n$ is a radical ideal in $\CC[P]$.
By the same reasoning, $I_{n,d}$ is a radical ideal. Since
$X_{n,d}$ is irreducible, we conclude that $I_{n,d}$ is a prime ideal in $\CC[P]$.
\end{proof}

\begin{corollary}
The radical ideal $I_n$ has $n+1$ minimal primes, one for each Grassmannian:
\begin{equation}
\label{eq:primedec}
 I_n \,\, = \,\, \bigcap_{d=0}^n \,I_{n,d}. 
\end{equation}
\end{corollary}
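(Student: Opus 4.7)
The plan is to deduce this corollary directly from Theorem~\ref{thm:radical} together with the orbit decomposition established in its proof. That proof already exhibits the set-theoretic decomposition
\[ X_n(\CC) \,=\, \bigsqcup_{d=0}^n X_{n,d}(\CC), \]
where $X_{n,d}(\CC)$ is the $O(n)$-conjugation orbit of the idempotent $M_d$. Each $X_{n,d}$ is closed in $X_n$, being cut out by the single additional linear equation ${\rm trace}(P) - d$, and it is irreducible since $I_{n,d}$ was shown to be prime in Theorem~\ref{thm:radical}.

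Next, I would invoke the Nullstellensatz. Because $I_n$ is radical, we have $I_n = I(V(I_n)) = I(X_n(\CC))$. Applying the standard identity $I(A \cup B) = I(A) \cap I(B)$ to the orbit decomposition, together with $I(X_{n,d}(\CC)) = I_{n,d}$ (this last equality holds because $I_{n,d}$ is prime, hence radical), yields
\[ I_n \,=\, \bigcap_{d=0}^n I(X_{n,d}(\CC)) \,=\, \bigcap_{d=0}^n I_{n,d}. \]

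Finally, to verify that these $n+1$ primes are \emph{minimal}, I would observe that the orbits $X_{n,d}(\CC)$ are pairwise disjoint and all nonempty, so no one is contained in another. Hence the $X_{n,d}(\CC)$ are precisely the irreducible components of $V(I_n)$, and by the standard correspondence between irreducible components and minimal primes of a radical ideal, the $I_{n,d}$ are exactly the minimal primes of $I_n$. No serious obstacle arises here: the heavy lifting was done in Theorem~\ref{thm:radical}, and this corollary is its clean packaging as a primary decomposition.
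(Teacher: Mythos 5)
Your proposal is correct and follows essentially the same route as the paper: take the disjoint decomposition $X_n(\CC)=\bigcup_{d=0}^n X_{n,d}(\CC)$ from the proof of Theorem~\ref{thm:radical}, apply the ideal operator and the Nullstellensatz, and use that all the ideals involved are radical (resp.\ prime). Your extra observation that the pairwise disjoint, nonempty closed sets $X_{n,d}(\CC)$ admit no mutual inclusions, so the $I_{n,d}$ are indeed the \emph{minimal} primes, is a welcome bit of detail that the paper leaves implicit.
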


\begin{proof}
From the previous proof we have the following
 disjoint decomposition of varieties:
$$ \qquad X_n \,=\,V( I_n) 
\,\,\,=\,\,\, \,\bigcup_{d=0}^n X_{n,d}
\,\, = \,\, \bigcup_{d=0}^n \,V \bigl(\, I_{n,d}\,\bigr). $$
We now apply the ideal operator on both sides. This turns the union into an intersection of ideals.
Since all ideals are radical, by Theorem \ref{thm:radical}, Hilbert's Nullstellensatz implies
  (\ref{eq:primedec}).
\end{proof}

The Grassmannian ${\rm Gr}(d,n)$ is  isomorphic to the Grassmannian ${\rm Gr}(n-d,n)$.
We can see this isomorphism  very nicely for the
projection Grassmannians ${\rm pGr}(d,n)$ and ${\rm pGr}(n-d,n)$:

\begin{remark} \label{rmk:duality}
The linear involution $P \mapsto {\rm Id}_n - P$ fixes the radical ideal $I_n$,
and it switches the prime ideals $I_{n,d}$ and $I_{n,(n-d)}$ that define
 ${\rm pGr}(d,n)$ and ${\rm pGr}(n-d,n)$.
\end{remark}

There is one more linear coordinate change in matrix space which we wish to point out.

\begin{remark}
The projection Grassmannian
${\rm pGr}(d,n)$ is linearly isomorphic to the variety
of orthogonal symmetric matrices of trace $2d-n$.
Indeed, if we replace the matrix
$P$ with $Q = 2P-{\rm Id}_n$, then
the equation $P^2 = P$ becomes $Q^2 = {\rm Id}_n$,
so $Q$ is a symmetric matrix that is orthogonal.
Lai, Lim and Ye \cite{LLY} proposed this embedding of ${\rm pGr}(d,n)$ into the 
orthogonal group $O(n)$
in order to improve numerical stability in
 Grassmannian optimization~\cite{LWY}.
 \end{remark}

After the dimension, the second most important invariant of an embedded variety
is its degree. This is the number of complex intersection points with a generic
affine-linear subspace of complementary dimension.
 We computed the degree of our ideals up to $n=10$.

\begin{proposition} \label{prop:degs}
The degrees of the projection Grassmannians 
 ${\rm pGr}(d,n)$ are as follows:
 \setcounter{MaxMatrixCols}{13}
$$ \begin{matrix}
 & d\,\,= & 0 & 1 & 2 & 3 & 4 & 5 & 6 & 7 & 8 & 9 & 10 \smallskip \\
  n = 3 && 1 & 4 & 4 & 1 \\
  n = 4 && 1 & 8 & 12 & 8 & 1 \\
  n = 5 && 1 & 16 & 40 & 40 & 16 & 1  \\
  n = 6 && 1 & 32 & 140 & 184 & 140 & 32 & 1\\
  n = 7 && 1 & 64 & 504 & 992 & 992 & 504 &  64 & 1 \\
  n = 8 && 1 & 128 &  1848  & 5824 & 7056 & 5824 & 1848 & 128 & 1 \\
  n = 9 && 1 & 256 & 6864 & 36096 & 60864 & 60864 & 36096 & 6864 & 256 & 1\\
  n = 10&& 1 & 512 & 25740 & 232320 &587664 & 672288 &587664 & 232320  & 25740 & 512 & 1
  \end{matrix}
$$
\end{proposition}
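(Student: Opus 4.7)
The proposition is a tabulation of degrees for specific pairs $(d, n)$, so the strategy is computational: compute, for each pair with $0 \leq d \leq n \leq 10$, the Hilbert polynomial of the prime ideal $I_{n, d}$ in $\CC[P]$ and extract the degree as $d(n-d)!$ times its leading coefficient. The primality of $I_{n,d}$ established in Theorem \ref{thm:radical} guarantees that the Hilbert polynomial has the expected form. Concretely, I would form the ideal generated by the $\binom{n+1}{2}$ quadratics $f_{ij} = p_{ij} - \sum_{k=1}^n p_{ik} p_{kj}$ for $i \leq j$, together with $\mathrm{trace}(P) - d$, inside the polynomial ring in $\binom{n+1}{2}$ variables, and compute a Gr\"obner basis in \texttt{Macaulay2} or \texttt{OSCAR} to obtain the Hilbert series.

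Several observations make the computation manageable and the answers trustworthy. First, the duality $P \mapsto \mathrm{Id}_n - P$ (Remark \ref{rmk:duality}) identifies ${\rm pGr}(d,n)$ with ${\rm pGr}(n{-}d, n)$, so only $d \leq \lfloor n/2 \rfloor$ need be computed; the symmetry of the table is a built-in sanity check. Second, the extremal columns are known analytically: $d = 0$ and $d = n$ give the single points $\{0\}$ and $\{\mathrm{Id}_n\}$, both of degree $1$; while ${\rm pGr}(1, n) = \{vv^T : v \in \RR^n,\, \|v\|^2 = 1\}$ is the quadratic Veronese image of $\PP^{n-1}$ restricted to $\mathrm{trace}(P) = 1$, of degree $2^{n-1}$, matching the column $d = 1$ entry by entry. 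Third, numerical algebraic geometry offers an independent check: parametrize ${\rm pGr}(d, n)$ by $P = A^T(AA^T)^{-1}A$ for $A$ a generic $d \times n$ matrix, intersect with a generic affine-linear subspace of codimension $d(n-d)$ in $\RR^{\binom{n+1}{2}}$, and count complex solutions using \texttt{HomotopyContinuation.jl}~\cite{BT}; this yields a lower bound on the degree that in practice matches the symbolic answer.

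The main obstacle is scaling the symbolic computation to $n = 10$. The ambient ring then has $55$ variables and the ideal has $55$ quadratic generators plus the linear trace relation, which pushes Gr\"obner basis computation in characteristic zero to the edge of feasibility. I would address this by working over a large prime field to avoid coefficient blow-up, choosing a degree-reverse-lexicographic term order adapted to the block structure of $P$, and, when symbolic methods stall, certifying the dimension and degree via numerical irreducible decomposition on the parametrization above. Since Theorem \ref{thm:radical} already guarantees that ${\rm pGr}(d, n)$ is irreducible of dimension $d(n-d)$, the numerical count on a generic codimension-$d(n-d)$ slice is an exact degree rather than a mere lower bound once path-tracking is certified, which gives an entirely independent route to every entry of the table.
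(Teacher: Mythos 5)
Your proposal takes essentially the same route as the paper: the table is established by direct computation, with symbolic Gr\"obner/degree computations in {\tt Macaulay2} \cite{M2} for the smaller $n$ and numerical intersection of ${\rm pGr}(d,n)$ with generic affine-linear slices of complementary dimension via the monodromy method in {\tt HomotopyContinuation.jl} \cite{BT} for $n=9,10$, with the duality of Remark \ref{rmk:duality} and the irreducibility and dimension from Theorem \ref{thm:radical} serving as sanity checks. The only point to temper is your claim that certified path-tracking makes the numerical count exact: certification guarantees the tracked points are genuine, but completeness of the monodromy loop (hence equality rather than a lower bound) needs an additional argument such as a trace test, which is also the epistemic status of the paper's own computation for rows $n=9,10$.
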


\begin{proof}
Rows 3-8  were created with {\tt Macaulay2} \cite{M2}. We used
    {\tt HomotopyContinuation.jl} \cite{BT} for rows 9-10.
Namely, we computed the intersection 
 with affine-linear subspaces of complementary dimension
using the monodromy method in {\tt HomotopyContinuation.jl}.
\end{proof}

\begin{corollary}
The following observations from the table above are valid for all $n \geq 3$:
  \begin{itemize}
\item[(a)] The degrees agree for $(d,n)$ and $(n-d,d)$. \vspace{-0.18cm}
\item[(b)] The degree for $d=0$ equals $1$.                  \vspace{-0.16cm}
\item[(c)] The degree for $d=1$ equals $2^{n-1}$.
\end{itemize}
\end{corollary}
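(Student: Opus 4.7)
The plan is to handle the three parts separately using different tools. For (a), I would directly invoke Remark \ref{rmk:duality}: the linear involution $P \mapsto {\rm Id}_n - P$ is an affine automorphism of $\RR^{\binom{n+1}{2}}$ that maps ${\rm pGr}(d,n)$ isomorphically onto ${\rm pGr}(n-d,n)$. Since such ambient automorphisms preserve the degree of an embedded variety, the claim follows with essentially no additional work. For (b), I would show set-theoretically that ${\rm pGr}(0,n) = \{0\}$: any complex $P$ with $P^2 = P$ has minimal polynomial dividing the squarefree $t^2 - t$, hence is diagonalizable with eigenvalues in $\{0,1\}$, and the trace-zero condition forces all eigenvalues to vanish. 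By Theorem \ref{thm:radical}, the ideal $I_{n,0}$ is radical, so we obtain a single reduced point, of degree $1$.

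For (c), my plan is to identify ${\rm pGr}(1,n)$ with a hyperplane section of the affine cone over the second Veronese variety. Every symmetric rank-$\leq 1$ matrix over $\CC$ can be written as $vv^T$, so the cone $C \coloneqq \{vv^T : v \in \CC^n\} \subset \CC^{\binom{n+1}{2}}$ is the affine cone over the Veronese image $\nu_2(\mathbb{P}^{n-1})$, whose classical degree is $2^{n-1}$. When $P = vv^T$, the conditions $P^2 = P$ and ${\rm trace}(P) = 1$ are both equivalent to $v^T v = 1$, so ${\rm pGr}(1,n) = C \cap H$ where $H = \{{\rm trace}(P) = 1\}$ is an affine hyperplane that neither contains $C$ nor passes through its vertex. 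Since this is a proper hyperplane section of an irreducible cone, $\deg({\rm pGr}(1,n)) = \deg(C) = 2^{n-1}$.

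The one step I expect to require care is the degree equality for the hyperplane section in (c); Theorem \ref{thm:radical} supplies reducedness, but one still wants a clean transversality argument. If that becomes awkward, a direct Bezout count provides a clean backup: intersecting ${\rm pGr}(1,n)$ with a generic affine subspace $L$ of codimension $n-1$ and pulling back along the $2$-to-$1$ parametrization $v \mapsto vv^T$ gives a system of $n$ quadrics in the $n$ unknowns $v_1, \ldots, v_n$ (namely $v^T v = 1$ together with $n-1$ quadrics coming from $L$). Bezout then yields $2^n$ affine solutions, once one observes that the leading forms of these $n$ generic quadrics have no common zero in $\mathbb{P}^{n-1}$; folding $2$-to-$1$ onto the image recovers the desired count $2^{n-1}$.
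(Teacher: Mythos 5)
Your proposal is correct and follows essentially the same route as the paper: part (a) via the involution $P \mapsto {\rm Id}_n - P$ of Remark \ref{rmk:duality}, part (b) by identifying ${\rm pGr}(0,n)$ with the reduced point $\{0\}$, and part (c) by viewing ${\rm pGr}(1,n)$ as an affine-linear section of the second Veronese cone of degree $2^{n-1}$. Your extra care in (c) --- the radicality input from Theorem \ref{thm:radical} and the backup Bezout count of $2^n$ solutions folded $2$-to-$1$ along $v \mapsto vv^T$ --- only adds rigor to the transversality step that the paper leaves implicit.
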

  
 \begin{proof} Statement (a) follows from Remark \ref{rmk:duality}.
 Statement (b) holds because ${\rm pGr}(0,n)$ is just the
 zero matrix. Similarly, ${\rm pGr}(n,n) = \{{\rm Id}_n\}$.
For statement (c), we note that ${\rm pGr}(1,n)$ is the
$(n-1)$-dimensional variety consisting of all symmetric rank $1$
matrices of trace $1$. This is an affine-linear section of the
quadratic Veronese variety $\nu_2(\RR^n)$, so its degree is $2^{n-1}$.
 \end{proof}

\begin{conjecture}\label{conj:29}
The degree of  the $(2n-4)$-dimensional variety ${\rm pGr}(2,n)$ equals
 $\,2\binom{2n-4}{n-2}$.
\end{conjecture}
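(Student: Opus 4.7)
Our approach is to express $\deg{\rm pGr}(2,n)$ as an intersection number on ${\rm Gr}(2,n)$, using the birational map from Theorem~\ref{thm:PPlucker}. The morphism $\phi:{\rm Gr}(2,n)\dashrightarrow\overline{{\rm pGr}(2,n)}\subset\PP^{\binom{n+1}{2}}$ sending $[X]\mapsto[XX^T:\tfrac12\operatorname{tr}(X^TX)]$ is birational onto its image, because the orthogonal projection matrix recovers the $2$-plane uniquely.

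The first step is to identify the base locus of $\phi$. Every defining quadric $(XX^T)_{ij}$ and $\operatorname{tr}(X^TX)$ vanishes precisely when $X^2=0$, which for the rank-$2$ skew-symmetric cocircuit matrix $X$ is equivalent to the standard symmetric bilinear form on $\CC^n$ restricting to zero on the $2$-plane. The base locus is therefore the complex orthogonal Grassmannian $B={\rm OG}(2,n)\subset{\rm Gr}(2,n)$ of codimension $3$. Its normal bundle is canonically $N_{B/G}\cong\operatorname{Sym}^2 S^*|_B$, where $S$ is the tautological rank-$2$ subbundle, because the isotropic condition is the vanishing of a section of $\operatorname{Sym}^2 S^*$ on ${\rm Gr}(2,n)$.

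Next, I would resolve $\phi$ by blowing up $B$: let $\sigma:\widetilde G\to{\rm Gr}(2,n)$ with exceptional divisor $E$, and let $\widetilde\phi:\widetilde G\to\overline{{\rm pGr}(2,n)}$ be the extended birational morphism. Since the base scheme is reduced, $\widetilde\phi^*H=2\sigma^*\sigma_1-E$, where $\sigma_1$ is the Pl\"ucker hyperplane class, so
$$
\deg\overline{{\rm pGr}(2,n)}\,=\,(2\sigma^*\sigma_1-E)^{2n-4}\,=\,2^{2n-4}\,C_{n-2}\,-\,\sum_{k=3}^{2n-4}\binom{2n-4}{k}2^{2n-4-k}\!\int_B(\sigma_1|_B)^{2n-4-k}s_{k-3}(N_B),
$$
using the blow-up identity $\sigma_*(E^k)=(-1)^{k-1}(\iota_B)_*(s_{k-3}(N_B))$ for $k\geq 3$ and vanishing otherwise. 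As a check, for $n=4$ the two components of ${\rm OG}(2,4)$ are each embedded as a conic in the Pl\"ucker $\PP^5$, giving $\int_B\sigma_1|_B=4$ and $\int_B c_1(N_B)=12$, so the right-hand side is $32-(32-12)=12$, matching Proposition~\ref{prop:degs}.

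The main obstacle will be evaluating the excess sum in closed form for all $n$. The Chern classes $c(\operatorname{Sym}^2 S^*)=1+3\sigma_1+2\sigma_1^2+4\sigma_{1,1}+\cdots$ restricted to $B$, together with the Schubert-calculus expansion of $\sigma_1^j|_B$ on ${\rm OG}(2,n)$, are classical in principle; yet collapsing the resulting multi-term sum to $2\binom{2n-4}{n-2}=2(n-1)C_{n-2}$ appears to require a non-trivial hypergeometric identity. A second approach avoiding blow-up would be to degenerate $\overline{{\rm pGr}(2,n)}$ to a union of toric components via a one-parameter family and sum their degrees combinatorially; the factor $2(n-1)$ suggests an interpretation as Schubert-cell pairs weighted by a sign, which over all $2$-planes recovers $2(n-1)\deg{\rm Gr}(2,n)$.
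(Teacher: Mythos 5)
A point of context first: this statement is a \emph{conjecture} in the paper. The authors only verify it through $n=10$ via the degree computations in Proposition~\ref{prop:degs}, and they record in a remark that it was later proved by Lim and Ye \cite{LY} by an entirely different method, namely viewing ${\rm pGr}(d,n)$ as a quotient of $SO_n(\CC)$ and decomposing its coordinate ring into irreducible $SO_n(\CC)$-modules. Your excess-intersection strategy is therefore a genuinely different route, and its setup is essentially sound: the base locus of the quadratic system $X\mapsto[XX^T:\tfrac12\operatorname{tr}(X^TX)]$ is indeed the isotropic locus ${\rm OG}(2,n)$ of codimension $3$, its normal bundle is $\operatorname{Sym}^2S^*|_B$ since $B$ is the zero locus of a regular section of $\operatorname{Sym}^2S^*$, the push-forward formula $\sigma_*(E^k)=(-1)^{k-1}(\iota_B)_*s_{k-3}(N_B)$ is correct for a smooth codimension-$3$ center, the map is birational onto the closure of ${\rm pGr}(2,n)$, and your $n=4$ sanity check does reproduce the degree $12$ from the table.

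Nevertheless, as written this is not a proof, for two reasons. The milder one: the claim that the base scheme is reduced (equivalently, that the quadrics $(XX^T)_{ij}$ cut out ${\rm OG}(2,n)$ scheme-theoretically, so that one blow-up resolves $\phi$ with $\widetilde\phi^*H=2\sigma^*\sigma_1-E$ rather than $2\sigma^*\sigma_1-mE$ for some $m>1$) is asserted without justification. It is true and can be checked on the charts $A=[\,{\rm Id}_2\ \ Y\,]$, where the entries of $XX^T=A^T\operatorname{adj}(AA^T)A$ generate the same ideal as the three entries of $AA^T$, whose Jacobian has rank $3$ along the common zero locus; but this verification must appear. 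The serious gap is that the entire content of the conjecture --- the closed form $2\binom{2n-4}{n-2}$ --- is precisely the evaluation of the excess sum $\sum_{k\geq 3}\binom{2n-4}{k}2^{2n-4-k}\int_B(\sigma_1|_B)^{2n-4-k}\,s_{k-3}\bigl(\operatorname{Sym}^2S^*|_B\bigr)$ over ${\rm OG}(2,n)$, and you leave it unevaluated, explicitly deferring it to a ``non-trivial hypergeometric identity'' and sketching a toric degeneration only in outline. Reducing the conjecture to a Schubert/Segre-class computation on the orthogonal Grassmannian is a reasonable program, but until that sum is actually collapsed to $2^{2n-4}C_{n-2}-2\binom{2n-4}{n-2}$ (or one of your alternatives is carried out), the statement remains unproved; for a complete argument by other means, see \cite{LY}.
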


For each of our ideals, we computed the initial ideal with respect
to the standard weights, where each variable $p_{ij}$ has degree $1$.
This initial ideal contains the trace of $P$ and the entries of $P^2$.
We observed that these  generate the ideal when
$d$ is in the middle between $0$ and~$n$. 

\begin{conjecture} \label{conj:gbden}
Fix $d = \lfloor n/2 \rfloor$ and
consider the Gr\"obner degeneration 
with respect to the partial monomial order given by the usual total degree.
Then the limit of the projection Grassmannian ${\rm pGr}(d,n)$ is the
variety of symmetric $n \times n$ matrices whose square is zero.
The initial ideal equals $\langle P^2 , {\rm trace}(P) \rangle$.
This  ideal is radical, but it is generally not prime.
\end{conjecture}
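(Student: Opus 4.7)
My plan is to verify the three assertions of the conjecture --- that $\mathrm{in}(I_{n,d}) = K$ for $K := \langle P^2, {\rm trace}(P)\rangle$, that $K$ is radical, and that $K$ is generally not prime --- by combining flatness of Gr\"obner degeneration with an orbit analysis of $V(K)$ under the conjugation action of $O(n,\CC)$.

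The containment $K \subseteq \mathrm{in}(I_{n,d})$ is immediate: each generator $f_{ij} = p_{ij} - (P^2)_{ij}$ of $I_n$ has top-degree part $(P^2)_{ij}$, and ${\rm trace}(P) - d$ has top-degree part ${\rm trace}(P)$. Because Gr\"obner degeneration with respect to the total-degree filtration is flat, $\CC[P]/\mathrm{in}(I_{n,d})$ has the same affine Hilbert series as $\CC[P]/I_{n,d}$. Hence, to conclude $\mathrm{in}(I_{n,d}) = K$, it suffices to show that $\CC[P]/K$ is reduced of dimension $d(n-d)$ and of degree equal to $\mathrm{degree}({\rm pGr}(d,n))$ from Proposition~\ref{prop:degs}.

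Geometrically, $V(K) = V(P^2)$ set-theoretically, since any complex symmetric $P$ with $P^2 = 0$ is nilpotent and so has vanishing trace automatically. The conjugation action of $O(n,\CC)$ stratifies $V(P^2)$ by rank: a rank-$r$ point can be written as $P = VV^T$ with $V \in \CC^{n\times r}$ of full rank satisfying $V^T V = 0$, and the inclusion $\mathrm{im}(P) \subseteq \ker(P)$ forces $r \leq \lfloor n/2 \rfloor = d$. Dimension counting on the isotropic Stiefel variety $\{V : V^T V = 0\}$ modulo the right $O(r,\CC)$-action gives $r(n-r)$ for the rank-$r$ stratum, so the top one at $r = d$ matches $\dim({\rm pGr}(d,n)) = d(n-d)$. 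Non-primality is already visible here: the lower-rank orbit closures contribute further components, and when $n$ is even with $r = n/2$ the isotropic Stiefel variety itself splits into two connected components (as already happens for $n=2$, where $v_1^2 + v_2^2 = 0$ is a union of two lines).

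The decisive step, and the main obstacle, is the radicality of $K$ together with the degree computation. I would pursue radicality by showing that $\CC[P]/K$ is Cohen-Macaulay --- ideally by exhibiting an explicit regular sequence of length $\binom{n+1}{2} - d(n-d)$ inside $K$ --- and then verifying generic reducedness at a point of each rank stratum via the Jacobian criterion, in the style of Theorem~\ref{thm:radical}. The degree match can then be established either by a direct Hilbert-series computation for $\CC[P]/K$ or by summing contributions from the orbit closures using degree formulas for isotropic Grassmannians and their secant varieties. Once radicality and the degree equality are secured, the inclusion $K \subseteq \mathrm{in}(I_{n,d})$ with coinciding Hilbert series forces equality of ideals, and the primary decomposition of $K$ is precisely the intersection of the prime ideals of the rank-$r$ orbit closures, completing the non-primality assertion.
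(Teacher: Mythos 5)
First, a point of orientation: the statement you are proving is left \emph{open} in the paper --- it is stated as Conjecture~\ref{conj:gbden}, the authors prove no part of it beyond auxiliary facts (they determine the irreducible components of the variety $\{P=P^T:\,P^2=0\}$ via the $O_n(\CC)$-orbit representatives $S_{k,l}$, in the style of Theorem~\ref{thm:radical}, and they note that Lim and Ye later settled only the \emph{set-theoretic} version). Measured against that, your proposal is a plan rather than a proof, and the plan defers exactly the steps that constitute the conjecture. The containment $\langle P^2,{\rm trace}(P)\rangle\subseteq \mathrm{in}(I_{n,d})$ and the flatness/Hilbert-series reduction are fine, and your dimension count $r(n-r)$ for the rank-$r$ stratum (via $P=VV^T$ with $V^TV=0$) is correct and matches $d(n-d)$ at the top. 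But the decisive claims --- that $\CC[P]/\langle P^2,{\rm trace}(P)\rangle$ is Cohen--Macaulay (or at least has no embedded primes), that it is generically reduced along the top stratum, and that its degree equals the degree of ${\rm pGr}(d,n)$ from Proposition~\ref{prop:degs} --- are announced (``I would pursue radicality by\ldots'', ``the degree match can then be established either by\ldots'') but not carried out, and no candidate regular sequence, Hilbert series, or Jacobian computation is supplied. Since radicality and the degree identity are precisely the content of the conjecture, the proposal does not close the gap.

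There is also a concrete error in your geometric picture. The lower-rank orbit closures do \emph{not} ``contribute further components'': every symmetric $P$ with $P^2=0$ of rank $r<\lfloor n/2\rfloor$ lies in the closure of the top-rank stratum (conjugate to a block sum of rank-one blocks $S_1$ and scale blocks to zero), so $V(P^2)$ is irreducible for $n$ odd and has exactly two top-dimensional components for $n$ even, coming from the two families of maximal isotropic subspaces --- this is the content of the authors' component analysis. Consequently your final assertion that ``the primary decomposition of $K$ is precisely the intersection of the prime ideals of the rank-$r$ orbit closures'' is false: the minimal primes are one (for $n$ odd) or two (for $n$ even), and non-primality in the conjecture refers only to the even case; for $n$ odd, radicality would in fact force primality. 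Note also that your reduction ``reduced $+$ same dimension $+$ same degree $\Rightarrow$ equality of ideals'' silently uses equidimensionality of $V(P^2)$, which is exactly the orbit-closure containment you misstate, so even the scaffolding of the argument needs this correction before the (still missing) radicality and degree computations could be attempted.
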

  
\begin{example}[$n=4$]
The variety of $4 \times 4$ matrices of square zero 
has dimension $4$ and degree $12$ in $\CC^{10}$.
Its radical ideal $\langle P^2 , {\rm trace}(P) \rangle$ is
the intersection of two prime ideals of dimension $4$ and degree $6$,
each generated by $15$ quadrics together with the trace of $P$.
\end{example}
  
  Conjecture \ref{conj:gbden} leads to the following linear algebra question:
  {\em Which complex symmetric matrices have square zero?}
We can show that the variety $V(\langle P^2 , {\rm trace}(P) \rangle)$
is irreducible of dimension $d(d+1)$ for $n=2d+1$ odd,
and it has two components of dimension $d^2$ for $n=2d$ even.
These dimensions are consistent with Conjecture \ref{conj:gbden}.
The common degree of ${\rm pGr}(n,\lfloor n/2 \rfloor)$ and its initial variety
is $4,12,40,184,992,7056,\ldots$.
What is this sequence?

\begin{remark}
After the first posting of our article,
Lim and Ye \cite{LY} proved Conjecture~\ref{conj:29}.
In fact, they derived a   general formula for the degree of the real projection Grassmannian
\cite[Theorem 4.3]{LY}.
Their approach is to view the  projection Grassmannian ${\rm pGr}(d, n)$ as a quotient of the special orthogonal group $SO_n(\CC)$ and compute the degree by decomposing the coordinate ring of the Grassmannian into irreducible $SO_n(\CC)$-modules. 
Conjecture~\ref{conj:29} follows as a corollary.
In \cite[Corollary 4.5]{LY} they give closed formulas for the degree when $d = 3, 4$.
In \cite[Section 5]{LY},
Lim and Ye also give a set-theoretic answer to our Conjecture~\ref{conj:gbden}. 
\end{remark}

\section{The moment map}

In this section, we study the \emph{moment map} $m:{\rm Gr}(d,n)\rightarrow\mathbb{R}^n$. 
Generally, a moment map can be associated to a group acting on a symplectic manifold, and this construction is of interest in symplectic geometry and geometric invariant theory. In context of the Grassmannian, with symplectic structure induced by the Fubini-Study metric, 
the moment map arises naturally from the action of the torus $(\RR^\star)^n$ on the Grassmannian that is induced by stretching the axes of $\mathbb{R}^n$; see \cite[Example 3.5]{kirwan1984}. 
Writing $e_I = \sum_{i\in I} e_i \in \RR^n$ for $I \in \binom{[n]}{d}$, the image of
a point $X = (x_I)$ in
the Grassmannian ${\rm Gr}(d,n) \subset \PP^{\binom{n}{d}-1}$ under the moment map~equals
\begin{equation}
\label{eq:moment2}
m(X) \,\,=\,\, \frac{1}{\sum_J x_J^2} \cdot {\sum_I x_{I}^2 \, e_{I}}.
\end{equation}
Hence, the image of the moment map lies in the {\em hypersimplex} $\Delta(d,n)={\rm conv} \bigl\{ e_I:I\in\binom{[n]}{d} \bigr\}$.
This convex polytope consists of all nonnegative vectors which sum to $d$. The  convexity property,
derived in \cite{GGMS},
is that the moment map is in fact a surjection onto the hypersimplex:
\begin{equation}
\label{eq:convexityproperty}
m({\rm Gr}(d,n)) \,\,=\,\, \Delta(d,n).
\end{equation}

\begin{example}[$n=4,d=2$] \label{ex:mm24}
The moment map $m$ maps the $4$-dimensional Grassmannian ${\rm Gr}(2,4)$ onto the 
octahedron $\Delta(2,4) = {\rm conv}\{e_i+e_j: 1\leq i< j\leq 4 \}$.
The fiber over any interior point of ${\rm Gr}(2,4)$ is a curve. The fiber over
a vertex $e_i+e_j$ has several irreducible components over $\CC$.
But there is only one real point in that fiber, 
namely $e_i \wedge e_j$ in $\PP^5$.
\end{example}

The property (\ref{eq:convexityproperty}) 
remains valid if we restrict $m$
to the positive Grassmannian ${\rm Gr}(d,n)_{>0}$. 
A more refined result is that the torus orbit of any point $X\in{\rm Gr}(d,n)$ is mapped to
its {\em matroid polytope}  ${\rm conv}\{ e_I \,: \, x_I \neq 0 \}$; see \cite[Definition 13.6]{MS}.
 The restriction to positive points in the orbit are mapped bijectively to the open
  matroid  polytope, by~\cite[Theorem~8.24]{MS}.

\begin{example}
Let $X\subseteq\mathbb{R}^n$ be the cut space of a graph $G$, as  in Example \ref{ex:cut space}. The  coordinate $m_i(X)$ of the moment map is 
 the \emph{effective resistance} of edge $i$; see \cite{biggs1997}. Scaling the axes of $\mathbb{R}^n$ corresponds to choosing 
 edge lengths for $G$. The moment map induces a bijection between effective resistances (points in the matroid polytope) 
 and positive choices of edge lengths.
\end{example}

The moment map is another bridge between the
lives of the Grassmannian, and it also appears naturally in the
statistical DPP model that is given by the squared Grassmannian. 
We begin with the projection Grassmannian ${\rm pGr}(d,n)$,
which is an affine variety in $\RR^{\binom{n+1}{2}}$. 

\begin{proposition}
The moment map on ${\rm pGr}(d,n)$ is the projection map onto the diagonal:
\begin{equation}
\label{eq:moment3}
{\rm pGr}(d,n)\ni (p_{ij})_{1\leq i<j\leq n} \,\,\overset{m}{\longmapsto}\,\, (p_{11},p_{22},\dots,p_{nn})\in\mathbb{R}^n.
\end{equation}
\end{proposition}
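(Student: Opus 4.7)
The plan is to identify the two descriptions of the moment map by reading off the diagonal entries of $P$ via the formula from Theorem~\ref{thm:PPlucker}. The statement is about one and the same point of the Grassmannian presented in its two lives: the right-hand side $(p_{ii})_{i=1}^n$ comes from the projection-matrix description, while the left-hand side $m(X)$ comes from the Plücker description via (\ref{eq:moment2}). So the proof should amount to checking that specializing $i=j$ in Theorem~\ref{thm:PPlucker} recovers the moment-map formula (\ref{eq:moment2}) coordinatewise.

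Concretely, I would first unpack the $i$-th coordinate of the moment map. Writing $e_I = \sum_{\ell \in I} e_\ell$, the coefficient of $e_i$ in (\ref{eq:moment2})~is
$$
m_i(X) \,\,=\,\, \frac{\sum_{I \in \binom{[n]}{d},\, i \in I} x_I^2}{\sum_{J \in \binom{[n]}{d}} x_J^2}.
$$
Next, I would apply Theorem~\ref{thm:PPlucker} with $i=j$ to obtain
$$
p_{ii} \,\,=\,\, \frac{\sum_{K \in \binom{[n]}{d-1}} x_{iK}^2}{\sum_{I \in \binom{[n]}{d}} x_I^2}.
$$
The final step is a book-keeping observation: the Plücker coordinate $x_{iK}$ vanishes when $i \in K$ by skew-symmetry, and otherwise $iK$ is a $d$-subset containing $i$; and conversely, every $d$-subset $I \ni i$ arises uniquely as $iK$ with $K = I \setminus \{i\}$. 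Since we are summing squares, the sign ambiguity in the ordering of $iK$ is irrelevant, so the numerators match and $m_i(X) = p_{ii}$ for every $i$.

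I do not expect a serious obstacle here; the content is entirely in Theorem~\ref{thm:PPlucker}. The only point that deserves a sentence of care is the bijection $K \leftrightarrow I = \{i\} \cup K$ together with the invariance under sign, to justify $\sum_K x_{iK}^2 = \sum_{I \ni i} x_I^2$. With that identity in hand, the two formulas for $m_i$ and $p_{ii}$ are literally the same ratio, which proves (\ref{eq:moment3}).
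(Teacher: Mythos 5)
Your proposal is correct and follows the same route as the paper: specialize Theorem~\ref{thm:PPlucker} to $i=j$ and identify $p_{ii}=\sum_K x_{iK}^2/\sum_J x_J^2$ with the $i$-th coordinate of (\ref{eq:moment2}). Your extra remark on the bijection $K\leftrightarrow I=\{i\}\cup K$ (with $x_{iK}=0$ for $i\in K$ and signs killed by squaring) is exactly the book-keeping the paper leaves implicit.
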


\begin{proof}
Let $X$ be the Pl\"{u}cker coordinates of a subspace in ${\rm Gr}(d,n)$ and $P$ the projection matrix onto this subspace.
 Following Theorem \ref{thm:PPlucker}, the $i$th coordinate of the moment map 
 (\ref{eq:moment2}) is equal to
$\,
m_i(X) \,=\, \sum_K x^2_{iK}/ \sum_J x^2_J \,=\, p_{ii}$.
This identity establishes the claim.
\end{proof}

\begin{corollary}
The projection of ${\rm pGr}(d,n)$ onto the diagonal is the hypersimplex $\Delta(d,n)$.
\end{corollary}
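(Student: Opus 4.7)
The plan is to combine the preceding proposition with the convexity property (\ref{eq:convexityproperty}) recalled earlier in the section. By Theorem~\ref{thm:PPlucker}, the Pl\"ucker vector $X$ and the projection matrix $P$ determine each other, so $X \mapsto P$ is a bijection between ${\rm Gr}(d,n)$ and ${\rm pGr}(d,n)$. Under this bijection, the preceding proposition identifies the moment map (\ref{eq:moment2}) with the linear coordinate projection $\pi:\RR^{\binom{n+1}{2}} \to \RR^n$ sending a symmetric matrix to its diagonal. Therefore $\pi({\rm pGr}(d,n)) = m({\rm Gr}(d,n))$, and this set equals $\Delta(d,n)$ by (\ref{eq:convexityproperty}). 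That is the whole argument.

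For a proof independent of (\ref{eq:convexityproperty}), I would verify the two inclusions directly. The containment $\pi({\rm pGr}(d,n)) \subseteq \Delta(d,n)$ is elementary: from ${\rm trace}(P) = d$ we get $\sum_i p_{ii} = d$, and since both $P = P^2 = P^\top P$ and $I_n - P = (I_n-P)^2 = (I_n-P)^\top(I_n-P)$ are positive semidefinite, evaluating the quadratic forms at the standard basis vectors gives $p_{ii} \in [0,1]$. For the reverse inclusion $\Delta(d,n) \subseteq \pi({\rm pGr}(d,n))$, one must realize any prescribed diagonal $v \in \Delta(d,n)$ as the diagonal of a real symmetric projection matrix of rank $d$; this is exactly the content of the Schur--Horn theorem specialized to the spectrum consisting of $d$ ones and $n-d$ zeros.

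The main obstacle in the self-contained route is this surjectivity step: writing down an explicit projection matrix with prescribed diagonal is not immediate, and some version of Schur--Horn is essentially unavoidable. The short proof in the first paragraph sidesteps this by invoking (\ref{eq:convexityproperty}), whose derivation in \cite{GGMS} already subsumes the needed convexity statement.
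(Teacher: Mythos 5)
Your first paragraph is exactly the paper's argument: the corollary is an immediate consequence of the preceding proposition (moment map $=$ diagonal projection on ${\rm pGr}(d,n)$) together with the convexity property (\ref{eq:convexityproperty}) from \cite{GGMS}, so that part is correct and takes essentially the same approach.

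Your alternative, self-contained route is also correct and genuinely different in spirit: the inclusion $\pi({\rm pGr}(d,n)) \subseteq \Delta(d,n)$ via positive semidefiniteness of $P$ and ${\rm Id}_n - P$ is fine, and the reverse inclusion is indeed the Schur--Horn theorem for the spectrum $(1,\ldots,1,0,\ldots,0)$, since any $v \in [0,1]^n$ with $\sum_i v_i = d$ is majorized by that spectrum. What this buys is independence from the GGMS convexity theorem, replacing a statement about torus moment maps on the Grassmannian by a classical linear-algebra fact; the trade-off is that Schur--Horn specialized to idempotent spectra is essentially equivalent to the statement being proved, so the argument is not really more elementary, just differently sourced. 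Either way, the corollary stands.
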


The image of the complexification
of (\ref{eq:moment3}) is the affine space $\CC^{n-1} = \{z_1 + z_2 + \cdots + z_n = d \}$.
In the projective life of the Grassmannian, we identify this target space
with $\PP^{n-1}$.
The {\em projective moment map} is the following linear projection from the
 squared Grassmannian:
\begin{equation}
\label{eq:moment4}
{\rm Gr}(d,n) \,\rightarrow \,{\rm sGr}(d,n)\, \dashrightarrow \,\PP^{n-1}\, , \,\,
(x_I)\, \mapsto\, (q_I) \, \mapsto\,
\bigl(\sum_{K} q_{1K}: \sum_{K} q_{2K} : \,\cdots\, : \sum_K q_{nK} \bigr).
\end{equation}

\begin{example}[$d=2$]
By Theorem \ref{thm:conca},  ${\rm sGr}(2,n)$ comprises
symmetric $n\times n$ matrices $Q$ with zero diagonal and rank $\leq 3$. The moment map
(\ref{eq:moment4})
takes $Q$ to its row sum vector $m(Q) = (\sum_j q_{ij})_{1\leq i\leq n}$.
The image is the second hypersimplex $\Delta(2,n)$,  viewed projectively~in~$\PP^3$.
\end{example}

In Section 4, we identified  ${\rm sGr}(d,n)$ with a discrete statistical model,
namely the projection DPP model. In this setting,
the moment map serves as a  marginalization operator.
 
\begin{corollary}
Let $\mu$ be a projection DPP on $\binom{[n]}{d}$.  The $i$th coordinate of the moment 
map is the marginal probability that a random $d$-set in $[n]$ contains the element $i$.
The space of all marginal distributions for the projection DPP model on $\binom{[n]}{d}$ is
 the hypersimplex $\Delta(d,n)$.
\end{corollary}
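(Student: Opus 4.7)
The corollary has two assertions: that the $i$th coordinate of the moment map records the marginal probability $\Pr[i \in \mathbf{X}]$, and that the collection of such marginal vectors, as $\mu$ varies over projection DPPs on $\binom{[n]}{d}$, fills out the hypersimplex $\Delta(d,n)$. Both amount to assembling formulas already established, so I do not anticipate a substantial obstacle — the main task is to line up the identifications correctly.

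For the first assertion I would argue directly from the projection-DPP formula $\mu(I) = \det(P_I) = x_I^2/\sum_J x_J^2$, which was established in Section 4 via Lemma \ref{lem:seven}. Decomposing each $d$-set $I$ containing $i$ as $I = \{i\} \cup K$ with $K \in \binom{[n]\setminus\{i\}}{d-1}$ gives
\[
\Pr[i \in \mathbf{X}] \,\,=\,\, \sum_{I \ni i} \mu(I) \,\,=\,\, \frac{\sum_K x_{iK}^2}{\sum_J x_J^2}.
\]
By the diagonal case $j=i$ of Theorem \ref{thm:PPlucker}, the right-hand side equals $p_{ii}$; and by the definition (\ref{eq:moment4}) of the projective moment map, it is precisely the $i$th coordinate of $m(X)$. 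Thus $m_i(X) = \Pr[i \in \mathbf{X}]$, which is the specialization of the standard DPP identity $\Pr[i \in \mathbf{X}] = K_{ii}$ to projection kernels.

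For the second assertion I would invoke the convexity theorem cited in (\ref{eq:convexityproperty}), which asserts $m({\rm Gr}(d,n)) = \Delta(d,n)$. The parametrization of projection DPPs on $\binom{[n]}{d}$ by points of ${\rm Gr}(d,n)$ is bijective, since every rank-$d$ orthogonal projector on $\RR^n$ is the projection onto a unique $d$-plane and vice versa, and the map (\ref{eq:moment4}) factors through the squaring ${\rm Gr}(d,n) \to {\rm sGr}(d,n)$ that defines the DPP model. Consequently, the image of marginal vectors, taken over all projection DPPs, is exactly $m({\rm Gr}(d,n)) = \Delta(d,n)$. One could verify the inclusion $\subseteq$ elementarily — the diagonal of any projector lies in $[0,1]^n$ and has trace $d$, both immediate from $P^2 = P$ and ${\rm trace}(P) = d$ — but the surjective direction genuinely needs (\ref{eq:convexityproperty}).
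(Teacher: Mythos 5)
Your argument is correct and is essentially the paper's own: the first claim is Lemma \ref{lem:seven} combined with the diagonal case of Theorem \ref{thm:PPlucker} (so that $\Pr[i\in\mathbf{X}]=\sum_{I\ni i}\mu(I)=p_{ii}=m_i(X)$), and the second claim is the convexity property (\ref{eq:convexityproperty}), equivalently the earlier corollary that the diagonal projection of ${\rm pGr}(d,n)$ is $\Delta(d,n)$. One small correction that does not affect the proof: the passage from ${\rm Gr}(d,n)$ to projection DPPs is not bijective but generically $2^{n-1}$-to-one, since conjugating $P$ by a diagonal $\pm 1$ matrix preserves all principal minors (cf.\ Remark \ref{rmk:18}); all you actually need is that every projection DPP arises from some rank-$d$ projector and has marginal vector ${\rm diag}(P)$, which is what your argument in fact uses.
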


We now come to the fibers of the moment map. 
Fix $z \in \RR^n$ with $\sum_{i=1}^n z_i = d$. The
fiber over $z$ in the projection Grassmannian, denoted
$ {\rm pGr}(d,n)_z $, is the variety defined by 
\begin{equation}
\label{eq:pGrfiber}
 \quad P^2  = P \quad {\rm and}  \quad {\rm diag}(P) = z \quad \hbox{ for symmetric $n \times n$ matrices $P$.} 
\end{equation}

\begin{lemma}
For a general point $z$, the affine fiber $ {\rm pGr}(d,n)_z $
is an irreducible variety of dimension $(n-d-1)(d-1)$, and its
degree equals that of $ {\rm pGr}(d,n)$, given in Proposition~\ref{prop:degs}.
\end{lemma}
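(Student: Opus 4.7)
The plan is to analyze the moment map $\pi: {\rm pGr}(d,n) \to \Delta(d,n)$, which is the restriction to ${\rm pGr}(d,n)$ of the linear projection $P \mapsto (p_{11}, \ldots, p_{nn})$. By the convexity property (\ref{eq:convexityproperty}), $\pi$ is surjective onto the hypersimplex; since ${\rm trace}(P) = d$ on ${\rm pGr}(d,n)$, the map effectively lands in the $(n-1)$-dimensional affine hyperplane $\{z : \sum_i z_i = d\}$, and the fiber $X_z := {\rm pGr}(d,n)_z$ is cut out of $X := {\rm pGr}(d,n)$ by the $n-1$ independent linear equations $p_{ii} = z_i$ for $i = 1, \ldots, n-1$. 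The dimension claim follows directly from the generic fiber dimension theorem applied to this dominant morphism: $\dim X_z = \dim X - (n-1) = d(n-d) - (n-1) = (d-1)(n-d-1)$ for $z$ in a dense open subset of the hypersimplex.

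For irreducibility and degree, I would work in the projective closure $\bar{X} \subset \PP^{\binom{n+1}{2}}$ obtained by homogenizing via a variable $p_0$, noting that $\deg(\bar{X}) = \deg(X)$ and that the fiber conditions become $p_{ii} = z_i p_0$. For generic $z$, the degree-preservation follows by iterative Bertini: cut $\bar{X}$ one hyperplane at a time by $p_{ii} = z_i p_0$ for $i = 1, \ldots, n-1$. At stage $i$, the cut is a generic member of the pencil $\{p_{ii} - c p_0 = 0\}_{c \in \CC}$. As long as the current intermediate intersection is not contained in any hyperplane of this pencil --- equivalently, as long as $p_{ii}/p_0$ is nonconstant on it --- the cut does not contain the variety, so Bezout gives $\deg(\bar{X}^{(i)}) = \deg(\bar{X}^{(i-1)})$. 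This nontriviality follows inductively from the surjectivity of the partial moment map onto the image of the hypersimplex under projection to the first $i$ coordinates.

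The main obstacle is irreducibility, since iterative Bertini through pencils does not directly yield irreducibility (Bertini's irreducibility theorem typically requires linear systems of dimension at least two). The cleanest resolution, in my view, is to invoke connectedness of the generic real fiber, which follows from the Atiyah-Guillemin-Sternberg convexity theorem applied to the Hamiltonian torus action on the Grassmannian. Combined with smoothness of the complex fiber (via the Jacobian criterion, using smoothness of $X$ from Theorem~\ref{thm:radical}) and Zariski-density of real points in the complex fiber, smoothness together with connectedness implies irreducibility. Alternatively, one could work with the full $(n-1)$-dimensional linear system of hyperplanes defining the moment map and apply a version of Bertini for irreducible generic fibers of rational maps, which would require verifying that the associated map $\bar{X} \dashrightarrow \PP^{n-1}$ has a geometrically irreducible generic fiber.
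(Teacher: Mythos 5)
Your dimension count is fine and is exactly the paper's argument: the moment map is dominant onto the affine hyperplane $\{\sum_i z_i = d\}$, so the general fiber has dimension $d(n-d)-(n-1)=(d-1)(n-d-1)$. The problem is the irreducibility step. The paper handles irreducibility and degree in one stroke by treating $\{{\rm diag}(P)=z\}$ as a generic affine-linear section of ${\rm pGr}(d,n)$ and invoking Bertini; you correctly notice the delicate point that only the constants $z_i$ are generic while the directions $p_{ii}$ are fixed (so pencil-by-pencil Bertini gives nothing about irreducibility), but your proposed repair does not close the hole. You combine (i) connectedness of the real moment-map fiber (Atiyah), (ii) generic smoothness of the complex fiber, and (iii) ``Zariski-density of real points in the complex fiber.'' Items (i) and (ii) are fine, but (iii) is asserted without proof and is essentially equivalent to what you are trying to show: the real locus of the generic complex fiber is a manifold of real dimension $(d-1)(n-d-1)$, hence Zariski dense in the components it meets, but nothing in your argument excludes additional irreducible components of the complex fiber that miss the real locus entirely (or meet it in lower dimension); such components are invisible to both the Atiyah connectedness statement and to smoothness. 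This is not a hypothetical failure mode for this family of sections --- see Example~\ref{ex:mm24}, where fibers over special $z$ have several complex components but only one real point. Your stated fallback (a Bertini-type theorem for the rational map to $\PP^{n-1}$, ``which would require verifying that the generic fiber is geometrically irreducible'') is circular, as you yourself acknowledge. So the key content of the lemma, irreducibility of the general fiber, is not established.

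On the degree, your iterated pencil/Bezout argument is in the spirit of the paper's Bertini appeal, but as written it leaves two points open: a general member of the pencil $\{p_{ii}=c\,p_0\}$ cuts the projective closure in a cycle of the same degree, yet (a) components at infinity lying in the base locus $\{p_{ii}=p_0=0\}$ appear in every member and can absorb degree, so the affine fiber could a priori have smaller degree unless the behaviour at infinity is controlled, and (b) passing from equality of cycle degrees to equality of degrees of the (reduced) varieties needs generic reducedness of each successive section. In fairness, the paper's own two-line proof is equally brief on these points; the genuinely missing ingredient in your proposal is a justification of irreducibility --- for instance a monodromy/Stein-factorization argument for this special family of diagonal sections, or an actual proof that the real points are dense in the full complex fiber --- neither of which you supply.
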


\begin{proof}
The moment map takes the Grassmannian of dimension
$d(n-d)$ onto an affine space of dimension  $n-1$. The dimension of
the generic fiber is the difference between these~two numbers.
The set of symmetric matrices $P$ with ${\rm diag}(P) = z$
is a generic affine-linear space. It intersects
${\rm pGr}(d,n)$ in an irreducible variety of the same
degree, by Bertini's Theorem.
\end{proof}

\begin{example}[$d=2$] \label{ex:affinefiber}
The general fiber ${\rm pGr}(2,n)_z$
is a variety of dimension $n-3$
and degree $2 \binom{2n-4}{n-2}$.
This formula is proved for $n \leq 10$, and it is
Conjecture \ref{conj:29} for $n \geq 11$. In particular,
for $n=4$ we obtain a curve of degree $12$,
and for $n=5$ it is a surface of degree~$40$.
\end{example}

The fibers of the projective moment map
in (\ref{eq:moment4}) are denoted by
$ {\rm Gr}(d,n)_z $ and 
$ {\rm sGr}(d,n)_z $ respectively, where now $z \in \PP^{n-1}$.
Both fibers are projective varieties of dimension $(n-d-1)(d-1)$ in
$\PP^{\binom{n}{d}-1}$, and the latter is the image of the former under
the squaring morphism. 

\begin{remark}
The ideal of $ {\rm sGr}(d,n)_z $
 is obtained from ${\rm sGr}(d,n)$ by adding the $2 \times 2$ minors~of
\begin{equation}
\begin{bmatrix}
\label{eq:2byn}
\sum_K q_{1K} & \sum_K q_{2K} & \cdots &  \sum_K q_{nK} \\
 z_1  & z_2 & \cdots & z_n  \\
\end{bmatrix},
\end{equation}
and then saturating by the ideal 
generated by  the first row in (\ref{eq:2byn}).
The ideal for $ {\rm Gr}(d,n)_z $ is found in the same manner:
we take the Pl\"ucker ideal of ${\rm Gr}(d,n)$,
augmented by the $2 \times 2$ minors of the matrix (\ref{eq:2byn})
with $q_{iK}$ replaced by $x_{iK}^2$, and we  saturate this
by  the first row.
\end{remark}

\begin{example}[$d=2$] \label{ex:fibers}
The general fiber ${\rm sGr}(2,n)_z$ has dimension~$n-3$ and
 lies in a linear space of codimension $n-1$ in $\PP^{\binom{n}{2}-1}$.
For $n=4$, it is a plane cubic. For $n=5$, it is a surface
of degree $10$ in a $\PP^5$ with ideal generated by $10$ cubics.
For $n=6$, it is a threefold of degree $35$
in a $\PP^9$, defined by $50$ cubics.
For $n=7$, the degree is $126$ and there are $175$ cubics.

The inverse image of ${\rm sGr}(2,n)_z$ under the squaring map
is the fiber ${\rm Gr}(2,n)_z$. This variety linearly spans $\PP^{\binom{n}{2}-1}$,
it has the same degree as the affine fiber
${\rm pGr}(2,n)_z$ in Example \ref{ex:affinefiber}.
For $n=4$, it is a curve of degree $12$, whose 
ideal is generated by $4$ quadrics
and $2$ cubics. For $n=5$, it is a surface of degree $40$,
defined by $9$ quadrics, $4$ cubics and $5$ quartics.
For $n=6$, it is a threefold of degree $140$, defined
by $20$ quadrics, $10$ cubics and $15$ quartics.
For $n=7$ we get a fourfold of degree $504$, defined
by $41$ quadrics, $20$ cubics and $105$ quintics.
\end{example}

If the point $z$ is chosen in a special position in
the hypersimplex $\Delta(d,n)$, then
the fiber of the moment map can be reducible,
and a beautiful  combinatorial structure
emerges. We conclude this paper by
showing this, in a  case study for the smallest scenario.
Namely, we fix $d=2$ and $n=4$, and we focus on the
fibers of the projective moment map
${\rm Gr}(2,4) \dashrightarrow \PP^3$.
The combinatorics we now describe is the same for the
affine moment map ${\rm pGr}(2,4) \rightarrow \CC^3$.

\begin{example}($n=4, d=2$)
The chamber complex of the octahedron $\Delta(2,4)$
arises by slicing $\Delta(2,4)$
with the three planes spanned by quadruples of vertices.
It consists of $8$ tetrahedra, $12$ triangles,
$6$ edges, and one  vertex, located at 
$ (1/2,1/2,1/2,1/2)$.
We distinguish four cases, depending on the cell of the 
chamber complex that has $z$ in its relative interior.

\begin{figure}[h]
\centering
\includegraphics[width = 16cm]{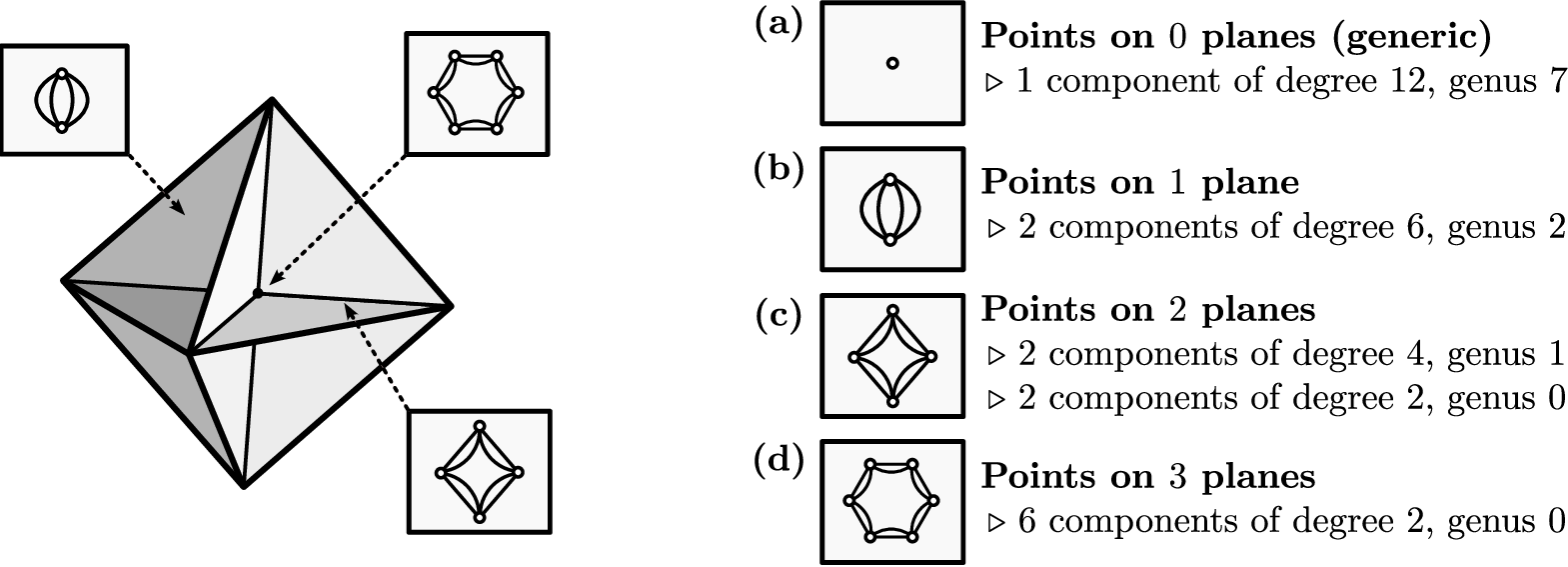} 
\caption{Moment map fibers over the chamber complex of the octahedron
$\Delta(2,4)$.}
\label{fig:octafib}
\end{figure}

\begin{itemize}
\item[(a)] The generic fiber ${\rm Gr}(2,4)_z$ is an
irreducible smooth curve
of degree $12$ and genus $7$. \vspace{-0.08in}
\item[(b)] If $z$ is in a triangle of the chamber complex, then its fiber ${\rm Gr}(2,4)_z$
has two irreducible components, each a curve of degree 
$6$ and genus $2$, which intersect in four points.  \vspace{-0.08in}
\item[(c)] If $z$ is on an edge, then there are
four components: two $\PP^1$'s and two elliptic curves. 
 \vspace{-0.08in}
\item[(d)] The central fiber ${\rm Gr}(2,4)_z$ is a ring of six
$\PP^1$'s: consecutive pairs touch at two points.
\end{itemize}

 Figure~\ref{fig:octafib} shows the dual graphs for the four types of fibers.
 Their vertices are the
irreducible components of the curve. Edges
 represent intersections
between pairs of components.
Figure \ref{fig:endler} shows
the Riemann surfaces corresponding to the 
curves of type (a) and (d).
The generic fiber (a) is
a Riemann surface of genus seven.
The most special fiber (d) is a ring
of six Riemann spheres, touching consecutively
at pairs of points.  Passing to nearby generic fibers
smoothes the $12$ singular points and creates the Riemann surface of genus seven.

The squaring map ${\rm Gr}(2,4)_z \rightarrow {\rm sGr}(2,4)_z$
takes our surface of genus seven onto a torus. This is the plane
cubic curve in Example \ref{ex:fibers}. By the
Riemann-Hurwitz Theorem, the map has $12$ ramification points,
and these correspond to the $12$ singular points seen in type (d).
For the dual graph (d) in Figure \ref{fig:octafib}, 
we obtain the natural $2$-to-$1$ map onto the hexagon.

\begin{figure}[h]
\centering \vspace{-0.1in}
 \includegraphics[width = 15cm]{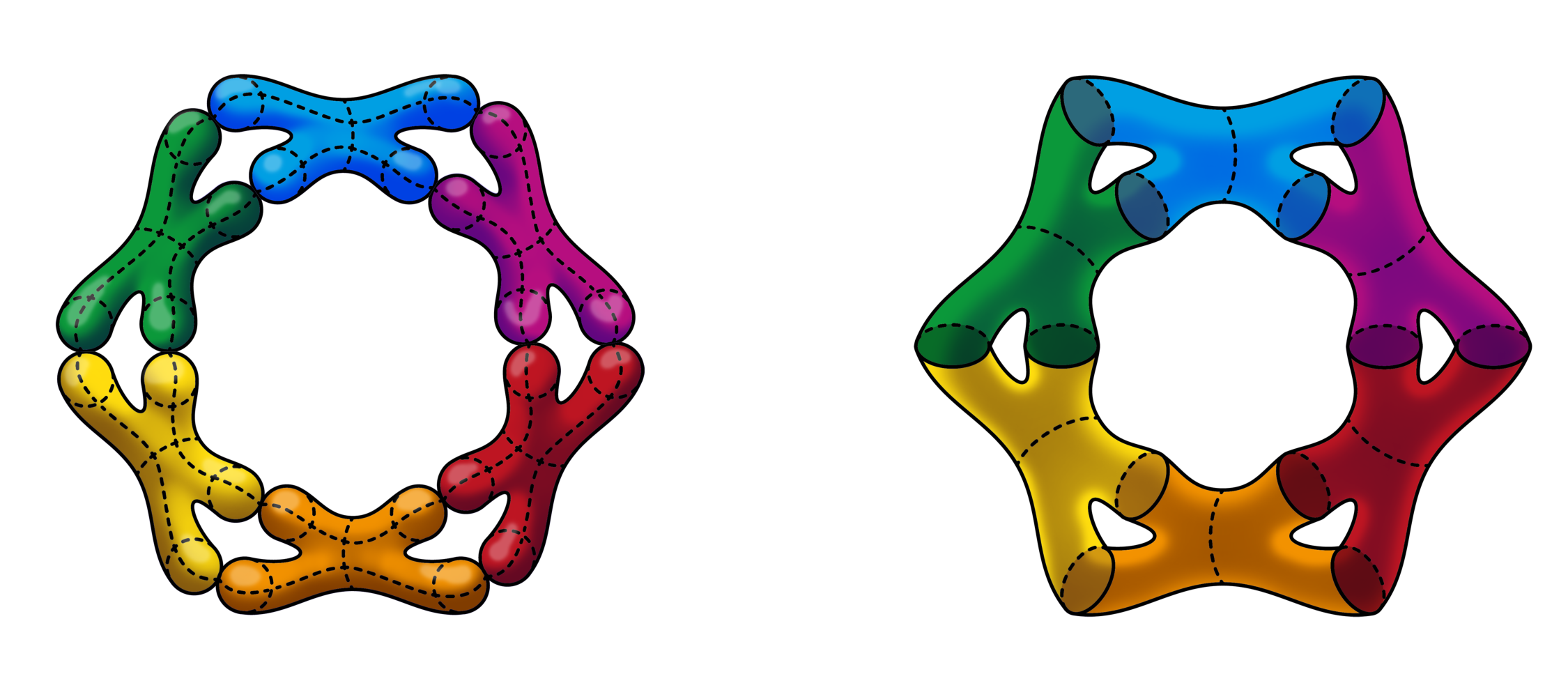} 
\vspace{-0.16in}
\caption{The fiber (d) over the midpoint of $\Delta(2,4)$, shown on the left, is a ring
of six Riemann spheres. The general fiber (a), shown on the right,
is a Riemann surface of genus~seven.}
\label{fig:endler}
\end{figure}
\end{example}

\noindent {\bf Acknowledgment.}
We thank Aldo Conca for helping us with
the proof of Theorem \ref{thm:conca}.

\medskip

		\bigskip
		\bigskip
		
		\footnotesize
		\noindent {\bf Authors' addresses:}
		
		\smallskip
		
		\noindent Karel Devriendt, MPI-MiS Leipzig
		\hfill \url{karel.devriendt@mis.mpg.de}

		\noindent Hannah Friedman, UC Berkeley
		\hfill \url{hannahfriedman@berkeley.edu}

		\noindent  Bernhard Reinke, MPI-MiS Leipzig
		\hfill \url{bernhard.reinke@mis.mpg.de}

		\noindent  Bernd Sturmfels, MPI-MiS Leipzig
		\hfill \url{bernd@mis.mpg.de}

\end{document}